\providecommand{\U}[1]{\protect\rule{.1in}{.1in}}
\newtheorem{theorem}{Theorem}[section]
\newtheorem{proposition}[theorem]{Proposition}
\newtheorem{corollary}[theorem]{Corollary}
\newtheorem{example}[theorem]{Example}
\newtheorem{examples}[theorem]{Examples}
\newtheorem{remark}[theorem]{Remark}
\newtheorem{lemma}[theorem]{Lemma}
\newtheorem{final remark}[theorem]{Final Remark}
\newtheorem{definition}[theorem]{Definition}
\begin{document}

\title{Aron-Berner extensions of almost Dunford-Pettis multilinear operators}
\author{Geraldo Botelho\thanks{Supported by CNPq Grant
304262/2018-8 and Fapemig Grant PPM-00450-17.}\,\, and  Luis Alberto Garcia\thanks{Supported by a CAPES scholarship.\newline 2020 Mathematics Subject Classification: 46B42, 46G25, 47B65, 47H60.\newline Keywords: Banach lattices, Aron-Berner extension, almost Dunford-Pettis multilinear operators, separately almost Dunford-Pettis operators.
}}
\date{}
\maketitle

\begin{abstract} We prove several results establishing conditions on the Banach lattices $E_1, \ldots, E_m$ and $F$ so that the Aron-Berner extensions of (separately) almost Dunford-Pettis $m$-linear operators from $E_1 \times \cdots \times E_m$ to $F$ are (separately) almost Dunford-Pettis. Illustrative examples are provided.  

\end{abstract}

\section{Introduction} Aron-Berner extensions, or Arens extensions, of multilinear operators between Banach spaces have been studied by many authors for decades (see \cite{dineen} and for recent developments see, e.g., \cite{daniela, cambridge, pilarenrique, elisa}). Aron-Berner extensions of multilinear mappings between Riesz spaces and Banach lattices we treated in, e.g., \cite{boulabiarbuskespage, ryanpositivity, buskesroberts}. A natural trend in this area is to investigate if a given property of multilinear operators passes from the operator to its extensions. In the realm of Banach spaces, the case of weakly sequentially continuous multilinear operators was studied in, e.g., \cite{gutierrez, zalduendo}. Inspired by the notion of almost Dunford-Pettis linear operator, in this paper we study the lattice counterpart of the class of weakly sequentially continuous multilinear operators and their Aron-Berner extensions.

Recall that a linear operator $u$ from a Banach lattice $E$ to a Banach space $G$ is almost Dunford-Pettis, see, e.g., \cite{bel, wic, gebu}, if $u$ sends disjoint weakly null sequences in $E$ to norm null sequences in $G$. The following notions are quite natural: for Banach lattices $E_1, \ldots, E_m$ and a Banach space $G$, an $m$-linear operator $A \colon E_1 \times \cdots \times E_m \longrightarrow G$ is said to be:\\
$\bullet$ {\it Separately almost Dunford-Pettis} if for any $j = 1, \ldots, m$, and all $x_1 \in E_1, \ldots, x_{j-1} \in E_{j-1}, x_{j+1} \in E_{j+1}, \ldots, x_m \in E_m$, the map
$$x_j \in E_j \longrightarrow A(x_1, \ldots, x_m) \in G, $$
is an almost Dunford-Pettis linear operator. The operator above shall be denoted by $A(x_{1},\ldots,x_{j-1},\bullet,x_{j+1},\ldots,x_{m})$.\\
$\bullet$ {\it Almost Dunford-Pettis} if the sequence $(A(x_{1,n}, \ldots, x_{m,n}))_{n=1}^\infty$ is norm null in $G$ whenever $(x_{j,n})_{n=1}^\infty$ is a disjoint weakly null sequence in $E_j$, $j = 1, \ldots, m$.

The aim of this paper is to study when Aron-Berner extensions of (separately) almost Dunford-Pettis multilinear operators are (separately) almost Dunford-Pettis. In Section \ref{sec2} we first prove some results we believe will convince the  reader that it is quite rare for Aron-Berner extensions of separately almost Dunford-Pettis operators to be separately almost Dunford-Pettis. Nevertheless, next we prove some positive results in this direction. In particular we show that Aron-Berner extensions of multilinear operators of finite rank are separately almost Dunford-Pettis and that Aron-Berner extensions of positive $m$-linear operators from $c_0^m$ to $\ell_1$ are separately almost Dunford-Pettis. We start Section \ref{sec3} comparing almost Dunford-Pettis with separately almost Dunford-Pettis operators. Then we prove some results that will enable us to give examples of almost Dunford-Pettis multilinear operators whose Aron-Berner extensions are not almost Dunford-Pettis. Next we prove several positive results, mainly for multilinear operators on Banach lattices whose duals have the positive Schur property. Examples are provided. 

By $G^*$ we denote the dual of the Banach space (or Banach lattice) $G$, by $G^{**}$ its bidual and by $B_G$ its closed unit ball. By $J_G \colon G \longrightarrow G^{**}$ we mean the canonical embedding. As usual, the space of bounded linear operators from $E$ to $F$ is denoted by ${\cal L}(E;F)$.

Now we give the description of the Aron-Berner extensions that shall serve our purposes. Let $E_1, \ldots, E_m,F$ be real Banach spaces and ${\cal L}(E_1, \ldots, E_m;F)$ be the space of continuous $m$-linear operators from $E_1 \times \cdots \times E_m$ to $F$. When $F$ is the scalar field we write ${\cal L}(E_1, \ldots, E_m)$. For the spaces of regular $m$-linear operators/forms we write ${\cal L}_r$ instead of ${\cal L}$. $S_m$ stands for the set of permutations of $\{1, \ldots, m\}$. For $\rho\in S_{m}$ and $k\in \{1,\ldots,m\}$, we fix the following notation:
$$E_{1},\ldots,\,_{\rho(1)}E,\ldots,\,_{\rho(k-1)}E,\ldots,E_{m}=\left\{ \begin{array}{cl}
E_{1},\ldots, E_{m} \mbox{~in this order} & \mbox{if}\,\ k=1, \\
E_{1},\ldots, E_{m} \mbox{~in this order, where}\\ E_{\rho(1)},\ldots,
   E_{\rho(k-1)} \mbox{~are removed} & \mbox{if}~ k=2,\ldots,m.
\end{array}\right.$$
For instance, $(E_1,\,_2E, E_3) = (E_1, E_3)$.
The same procedure defines the $(m-k+1)$-tuple $(x_{1},\ldots,\,_{\rho(1)}x,\ldots,\,_{\rho(k-1)}x,\ldots,x_{m})$ and the cartesian product $E_{1}\times\cdots \times\,_{\rho(1)}E \times\cdots \times\,_{\rho(k-1)}E\times\cdots \times E_{m}$.   Moreover, for $k=1,\ldots,m-1$, we write $$E_{1},\ldots,\,_{\rho(1)}E,\ldots,\,_{\rho(k)}E,\ldots,E_{m}=E_{1},\ldots, E_{m}$$ in this order, where $E_{\rho(1)},\ldots,E_{\rho(k)}$ are removed. In the same fashion we define the $(m-k)$-tuple $(x_{1},\ldots,_{\rho(1)}x,\ldots,_{\rho(k)}x,\ldots,x_{m})$ and the corresponding cartesian product. Finally, for $k=m$ we write $\mathcal{L}(E_{1},\ldots,\,_{\rho(1)}E,\ldots,\,_{\rho(k)}E,\ldots,E_{m};\mathbb{R})=\mathbb{R}.$


Given a permutation $\rho\in S_{m}$ and a continuous $m$-linear operator $A \colon E_{1}\times\cdots\times E_{m} \longrightarrow F$, the Aron-Berner extension of $A$ with respect to $\rho$ is the $m$-linear operator $AB_{m}^{\rho}(A)\colon E_{1}^{**}\times
\cdots\times E_{m}^{**}\longrightarrow F^{**}$ defined by $$ AB_{m}^{\rho}(A)(x_{1}^{**},\ldots,x_{m}^{**})(y^{*})=\big(\overline{x_{
\rho(m)}^{**}}^{\rho}\circ\cdots\circ\overline
{x_{\rho(1)}^{**}}^{\rho}\big)(y^{*}\circ A),$$
where, for every $x_{\rho(k)}^{**} \in E_{\rho(k)}^{**}$, the linear operator
\begin{equation}\label{defope}\overline{x_{\rho(k)}^{\prime\prime}}^{\rho}\colon \mathcal{L}(E_{1},\ldots,\,_{\rho(1)}E,\ldots,\,_{\rho(k-1)}E,\ldots,E_{m})\longrightarrow \mathcal{L}(E_{1},\ldots,\,_{\rho(1)}E,\ldots,\,_{\rho(k)}E,\ldots,E_{m}),
\end{equation}
is given by
$$\overline{x_{\rho(k)}^{\prime\prime}}^{\rho}(A)(x_{1},\ldots,\,_{\rho(1)}x,
\ldots,\,_{\rho(k)}x,\ldots,x_{m})=x_{\rho(k)}^{\prime\prime}(A(x_{1},\ldots,\,_{\rho(1)}x,\ldots,\,_{\rho(k)}x,\bullet\,,\ldots,x_{m})).$$
Details can be found in \cite{lg, villanueva}.


\section{Separately almost Dunford-Pettis operators}\label{sec2}

A bidual extension of a continuous $m$-linear operator $A\colon G_{1}\times\cdots\times G_{m}\longrightarrow G$ between Banach spaces is a continuous $m$-linear operator  $\widetilde{A}\colon G_{1}^{\ast\ast}\times\cdots\times G_{m}^{\ast\ast}\longrightarrow G^{\ast\ast}$ such that $\widetilde{A} \circ (J_{G_{1}},\ldots,J_{G_{m}})=J_{G} \circ A$. Of course, Aron-Berner extensions are bidual extensions.

Using that the canonical embedding $J_E \colon E \longrightarrow E^{**}$ is a Riesz homomorphism for every Banach lattice $E$, it is immediate that only bidual extensions of separately almost Dunford-Pettis operators can be separately almost Dunford-Pettis.

Let $E_1, \ldots, E_m$ be Banach lattices. Since bounded linear functionals on Banach lattices are almost Dunford-Pettis, continuous $m$-linear forms $A \colon E_1 \times \cdots \times E_m\longrightarrow \mathbb{R}$ and all their bidual extensions are separately almost Dunford-Pettis. Let us see that, in the vector-valued case, it may happen that all Aron-Berner extensions of a separately almost Dunford-Pettis multilinear operator are not separately almost Dunford-Pettis.



\begin{example}\rm Recall that a Banach lattice $E$ has the positive Schur property if disjoint (or positive or disjoint positive) weakly null sequences in $E$ are norm null (see, e.g., \cite{wnukbari}). Consider the Banach lattice $E = \big(\oplus_{n}\ell_{\infty}^{n}\big)_{1}$, where each $\ell^n_\infty$ is $\mathbb{R}^n$ endowed with the sup norm and with the coordinatewise order. It is well known that $E$ has the positive Schur property and in \cite[Example 2.8]{jg} is it proved that $E^{**}$ lacks the positive Schur property. The positive bilinear operator
$$A\colon\mathbb{R}\times E \longrightarrow E~, A(\lambda,y)=\lambda y,$$
is separately almost Dunford-Pettis: It is obvious that $A(\bullet, y)$ is almost Dunford-Pettis for every $y \in E$, and $A(\lambda, \bullet)$ is almost Dunford-Pettis for every $\lambda \in \mathbb{R}$ because $E$ has the positive Schur property. Since linear operators from $\mathbb{R}$ to $E^*$ and from $E$ to $\mathbb{R}^* = \mathbb{R}$ are compact, \cite[Theorem 1]{bovi} implies that $A$ admits a unique bidual extension $\widetilde{A} \colon \mathbb{R} \times E^{**} \longrightarrow E^{**}$, which is separately weak$^*$-weak$^{*}$ continuous. But it is well known that if a bilinear operator admits a separately weak$^*$-weak$^{*}$ continuous bidual extension $B$, then all its Aron-Berner extensions coincide with $B$ (see \cite{peralta}). So, the only Aron-Berner extension of $A$ is $\widetilde{A}$. Combining Goldstine's Theorem with the weak$^*$-weak$^{*}$ continuity of $\widetilde{A}$ it follows that
$$\widetilde{A}(\lambda,y^{\ast\ast}) = \lambda y^{**} \mbox{ for all } \lambda \in \mathbb{R} \mbox{ and } y^{**} \in E^{**}. $$
Thus, $\widetilde{A}(1,\bullet)$ is the identity operator on ${E}^{\ast\ast}$, which is not almost Dunford-Pettis because ${E}^{\ast\ast}$ fails the positive Schur property. Therefore $\widetilde{A}$, which is the only Aron-Berner extension of $A$, is not separately almost Dunford-Pettis.
\end{example}

\begin{proposition} Let $m \in \mathbb{N}$. The following are equivalent for the Banach lattices $E_{1},\ldots,E_{m}$ and $F$. \\
{\rm (a)} If $j \in \{1, \ldots, m\}$ and the linear operator $T \colon E_{j} \longrightarrow F$ is almost Dunford-Pettis, then its second adjoint  $T^{\ast\ast}\colon E_{j}^{\ast\ast} \longrightarrow F^{\ast\ast}$ is almost Dunford-Pettis.\\
 {\rm (b)} If $A \colon E_1 \times \cdots \times E_m \longrightarrow F $ is a separately almost Dunford-Pettis $m$-linear operator which admits a separately weak$^{*}$-weak$^{*}$ continuous extension $\widetilde{A} \colon {E_1}^{\ast\ast} \times \cdots \times E_{m}^{\ast\ast} \longrightarrow F^{\ast\ast}$, then for every $j = 1, \ldots, m$, and all $x_i \in E_i$, $i \neq j$, the linear operator  $\widetilde{A}_{x_{1},\ldots,x_{j-1},x_{j+1},\ldots,x_{m}}\colon E_{j}^{\ast\ast}\longrightarrow F^{\ast\ast}$ given by $$\widetilde{A}_{x_{1},\ldots,x_{j-1},x_{j+1},\ldots,x_{m}}(x_{j}^{\ast\ast})=\widetilde{A}(J_{E_{1}}(x_{1}),\ldots,J_{E_{j-1}}(x_{j-1}),x_{j}^{\ast\ast},J_{E_{j+1}}(x_{j+1}),\ldots,J_{E_{m}}(x_{m})),$$ is almost Dunford-Pettis.
\end{proposition}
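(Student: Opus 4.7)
\medskip

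\noindent\textbf{Proof proposal.} The plan is to prove the two directions separately, exploiting in each case the identification of a one-variable slice of $\widetilde{A}$ with the second adjoint of the corresponding slice of $A$.

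\medskip

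\noindent\emph{(a) $\Rightarrow$ (b).} Fix $j \in \{1, \ldots, m\}$ and $x_i \in E_i$ for $i \neq j$, and set
$$T := A(x_1, \ldots, x_{j-1}, \bullet, x_{j+1}, \ldots, x_m) \colon E_j \longrightarrow F.$$
Since $A$ is separately almost Dunford-Pettis, $T$ is almost Dunford-Pettis, so by (a) the operator $T^{**} \colon E_j^{**} \longrightarrow F^{**}$ is almost Dunford-Pettis. I claim that
$$\widetilde{A}_{x_1, \ldots, x_{j-1}, x_{j+1}, \ldots, x_m} = T^{**}.$$
Both maps are bounded linear operators from $E_j^{**}$ to $F^{**}$. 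The map $T^{**}$ is weak$^*$-weak$^*$ continuous by the general theory of adjoints, while $\widetilde{A}_{x_1, \ldots, x_{j-1}, x_{j+1}, \ldots, x_m}$ is weak$^*$-weak$^*$ continuous because $\widetilde{A}$ is separately weak$^*$-weak$^*$ continuous. On the weak$^*$-dense subspace $J_{E_j}(E_j)$ both maps agree, since for $x_j \in E_j$ the bidual extension property $\widetilde{A} \circ (J_{E_1}, \ldots, J_{E_m}) = J_F \circ A$ yields $\widetilde{A}_{\ldots}(J_{E_j}(x_j)) = J_F(T(x_j)) = T^{**}(J_{E_j}(x_j))$. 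Goldstine's theorem now gives the equality of the two operators on all of $E_j^{**}$, and (b) follows.

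\medskip

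\noindent\emph{(b) $\Rightarrow$ (a).} If $m = 1$, statements (a) and (b) coincide. Assume $m \geq 2$. Let $T \colon E_j \longrightarrow F$ be almost Dunford-Pettis. For each $i \neq j$ choose $\varphi_i \in E_i^*$ and $x_i \in E_i$ with $\varphi_i(x_i) = 1$, and define
$$A \colon E_1 \times \cdots \times E_m \longrightarrow F \quad \text{by} \quad A(y_1, \ldots, y_m) = \bigg(\prod_{i \neq j} \varphi_i(y_i)\bigg) T(y_j).$$
Each slice of $A$ in a direction $i \neq j$ is a rank-$\leq 1$ operator into $F$, hence almost Dunford-Pettis, while the slice in direction $j$ is a scalar multiple of $T$. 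Thus $A$ is separately almost Dunford-Pettis. Now put
$$\widetilde{A}(y_1^{**}, \ldots, y_m^{**}) = \bigg(\prod_{i \neq j} y_i^{**}(\varphi_i)\bigg) T^{**}(y_j^{**}).$$
This is an $m$-linear extension of $A$ to the biduals, and it is separately weak$^*$-weak$^*$ continuous: in each coordinate $i \neq j$ the coefficient $y_i^{**} \mapsto y_i^{**}(\varphi_i)$ is weak$^*$-continuous by definition, and in coordinate $j$ the operator $T^{**}$ is weak$^*$-weak$^*$ continuous. Applying (b) with the chosen $x_i$, we obtain that
$$\widetilde{A}_{x_1, \ldots, x_{j-1}, x_{j+1}, \ldots, x_m}(y_j^{**}) = \bigg(\prod_{i \neq j} \varphi_i(x_i)\bigg) T^{**}(y_j^{**}) = T^{**}(y_j^{**})$$
is almost Dunford-Pettis, which is exactly (a).

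\medskip

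\noindent\textbf{Main obstacle.} The only subtle point is the uniqueness argument in (a) $\Rightarrow$ (b): one must verify both that the hypothesized extension $\widetilde{A}$ is a genuine bidual extension (so it agrees with $J_F \circ A$ on the image of the canonical embeddings) and that the separate weak$^*$-weak$^*$ continuity really forces the slice to coincide with $T^{**}$ globally; once Goldstine is invoked this becomes routine. The reverse direction is straightforward given the freedom to choose normalizing functionals $\varphi_i$, provided all $E_i$ with $i \neq j$ are nonzero (an assumption that causes no loss of generality).
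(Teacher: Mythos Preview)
Your proof is correct and follows essentially the same approach as the paper: in both directions you and the authors identify the slice $\widetilde{A}_{x_1,\ldots,x_{j-1},x_{j+1},\ldots,x_m}$ with $T^{**}$ and, for (b)$\Rightarrow$(a), build $A$ as a tensor product of $T$ with normalizing functionals. Your (a)$\Rightarrow$(b) argument is in fact slightly cleaner than the paper's, since you establish the operator identity $\widetilde{A}_{x_1,\ldots,x_{j-1},x_{j+1},\ldots,x_m}=T^{**}$ once via weak$^*$ density and Hausdorffness, whereas the paper verifies it pointwise using a Goldstine net for each element of a given test sequence.
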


\begin{proof} (a)$\Rightarrow$(b) Let $A$ be a separately almost Dunford-Pettis $m$-linear operator which admits a separately weak$^{*}$-weak$^{*}$ continuous extension $\widetilde{A}$, $j\in\{1,\ldots,m\}$ and  $x_{i}\in E_{i}$ for $i\neq j$. The linear operator $T\colon E_{j}\longrightarrow F$ given by $T(x_{j})=A(x_{1},\ldots,x_{m})$ is almost Dunford-Pettis because $A$ is separately almost Dunford-Pettis. So, $T^{**}$ is almost Dunford-Pettis by assumption. Let $(x_{j,n}^{\ast\ast})_{n=1}^{\infty}$ be a disjoint positive weakly null sequence in $E_{j}^{\ast\ast}$.  We have $\|T^{\ast\ast}(x_{j,n}^{\ast\ast})\|\longrightarrow 0$. 
For every $n$, by Goldstine's Theorem there is a net $(x^n_{\alpha_{j}})_{\alpha_{j}\in\Omega_{j}}$ in $E_{j}$ such that $x_{j,n}^{\ast\ast}=\omega^{\ast}$-$\displaystyle\lim_{\alpha_{j}}J_{E_{j}}(x^n_{\alpha_j})$. 
Since $\widetilde{A}$ is separately weak$^{*}$-weak$^{*}$ continuous,
\begin{align*}
\widetilde{A}_{x_{1},\ldots,x_{j-1},x_{j+1},\ldots,x_{m}}(x_{j,n}^{\ast\ast})&=\widetilde{A}(J_{E_{1}}(x_{1}),\ldots,J_{E_{j-1}}(x_{j-1}),x_{j,n}^{\ast\ast},J_{E_{j+1}}(x_{j+1}),\ldots,J_{E_{m}}(x_{m}))\\
&=\omega^{\ast}{\rm -}\lim_{\alpha_{j}}\widetilde{A}(J_{E_{1}}(x_{1}),\ldots,J_{E_{j}}(x^n_{\alpha_j}),\ldots,J_{E_{m}}(x_{m}))\\
&=\omega^{\ast}{\rm -}\lim_{\alpha_{j}}J_{F}(A(x_{1},\ldots,x^n_{\alpha_{j}},\ldots,x_{m}))\\
& =\omega^{\ast}{\rm -}\lim_{\alpha_{j}} J_{F}(T(x^n_{\alpha_{j}}))=\omega^{\ast}{\rm -}\lim_{\alpha_{j}}T^{\ast\ast}(J_{E_{j}}(x^n_{\alpha_{j}}))=T^{\ast\ast}(x_{j,n}^{\ast\ast})
\end{align*}
for every $n$. Making $n \to \infty$ we get  $\|\widetilde{A}_{x_{1},\ldots,x_{j-1},x_{j+1},\ldots,x_{m}}(x_{j,n}^{\ast\ast})\|=\|T^{\ast\ast}(x_{j,n}^{\ast\ast})\|\longrightarrow 0$, showing that $\widetilde{A}_{x_{1},\ldots,x_{j-1},x_{j+1},\ldots,x_{m}}$ is almost Dunford-Pettis.\\
(b)$\Rightarrow$(a) Let $i \in \{1, \ldots, m\}$ and $T\colon E_{i} \longrightarrow F$ be an almost Dunford-Pettis linear operator. Choose $0 \neq \varphi_{j}\in E_{j}^{\ast}$ for $j\neq i$ and consider the $m$-linear operator
$$A\colon E_{1}\times\cdots \times E_{m}\longrightarrow F~,~A(x_{1},\ldots,x_{m})=T(x_{i})\cdot \textstyle\prod\limits_{j=1, j\neq i}^{m}\varphi_{j}(x_{j}).$$ 
Since the operator $T$ and the functionals $\varphi_{j}$, $j \neq i$, are almost Dunford-Pettis, $A$ is separately almost Dunford-Pettis. Since adjoint operators are weak$^{*}$-weak$^{*}$ continuous, it is immediate that the $m$-linear operator
  $$\widetilde{A} \colon E_{1}^{\ast\ast} \times \cdots \times E_{m}^{\ast\ast}\longrightarrow F^{\ast\ast}~,~ \widetilde{A}(x_{1}^{\ast\ast},\ldots,x_{m}^{\ast\ast}) = T^{\ast\ast}(x_{i}^{\ast\ast})\cdot \textstyle\prod\limits_{j=1, j\neq i}^{m}x_{j}^{\ast\ast}(\varphi_{j}),$$
is a separately weak$^{*}$-weak$^{*}$ continuous bidual extension of $A$. So $A$ fulfills the assumptions of (b). 
Choosing $x_{j}\in E_{j}$ such that $\varphi_{j}(x_{j})=1$ for each $j \neq i$, by (b) the operator $\widetilde{A}_{x_{1},\ldots,x_{j-1},x_{j+1},\ldots,x_{m}}$ is almost Dunford-Pettis. For every $x_{i}^{\ast\ast} \in E_i^{**}$, $$\widetilde{A}_{x_{1},\ldots,x_{j-1},x_{j+1},\ldots,x_{m}}(x_i^{**})=\widetilde{A}(J_{E_{1}}(x_{1}),\ldots,x_{i}^{\ast\ast},\ldots,J_{E_{m}}(x_{m}))=T^{\ast\ast}(x_{i}^{\ast\ast}),$$
from which it follows that 
$T^{\ast\ast}$ is almost Dunford-Pettis.
\end{proof}

As to conditions (a) and (b) of the Proposition above, it is worth emphasizing the following:\\
$\bullet$ Condition (a) was investigated in \cite{lg2} and, as the reader can check there, it holds only under strong assumptions on the spaces and/or on the operators.\\
$\bullet$ Condition (b), which is equivalent to (a), is much less than the desired implication, which is: $A$ is separately almost Dunford-Pettis $ \Longrightarrow$ the Aron-Berner extensions of $ A$ are separately almost Dunford-Pettis.

This means that, even under strong assumptions, only a little can be obtained. The conclusion is that the desired implication should hold only in very specific situations. For the rest of this section we shall pursue such situations.

The first situation is the case of finite rank multilinear operators.
Recall that a map taking values in a linear space has finite rank if the subspace generated by its range is finite dimensional. Using that multilinear forms are separately almost Dunford-Pettis, it is easy to see that if $E_1, \ldots, E_m$ are Banach lattices and $G$ is a Banach space, then every continuous $m$-linear operator from $E_1 \times \cdots \times E_m$ to $G$ of finite rank is separately almost Dunford-Pettis.

\begin{proposition}\label{propo1}
All Aron-Berner extension of a finite rank $m$-linear operator $A\in\mathcal{L}(E_{1},\ldots,E_{m};G)$ are of finite rank, hence separately almost Dunford-Pettis. 
\end{proposition}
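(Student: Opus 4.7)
The plan is to show that $AB_m^{\rho}(A)$ is itself a finite rank $m$-linear operator, and then invoke the fact (noted immediately before the proposition) that every finite rank continuous $m$-linear operator from $E_1 \times \cdots \times E_m$ to a Banach space is separately almost Dunford-Pettis.

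First, because $A$ has finite rank, I would choose a basis $g_{1},\ldots,g_{n}$ of the (finite-dimensional) span of the range of $A$ in $G$, and write
$$A(x_{1},\ldots,x_{m}) = \sum_{k=1}^{n}\phi_{k}(x_{1},\ldots,x_{m})\, g_{k},$$
where each $\phi_{k}\colon E_{1}\times\cdots\times E_{m}\longrightarrow \mathbb{R}$ is an $m$-linear form; continuity of $A$ together with the finite-dimensionality of $\mathrm{span}\{g_{1},\ldots,g_{n}\}$ forces each $\phi_{k}$ to be continuous.

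Next I would trace this decomposition through the definition of the Aron-Berner extension. For any $y^{*}\in G^{*}$, we have $y^{*}\circ A = \sum_{k=1}^{n}y^{*}(g_{k})\,\phi_{k}$ as a continuous $m$-linear form on $E_{1}\times\cdots\times E_{m}$. Since every operator $\overline{x^{**}}^{\rho}$ appearing in (\ref{defope}) is linear in its multilinear-form argument, the composition $\overline{x_{\rho(m)}^{**}}^{\rho}\circ\cdots\circ\overline{x_{\rho(1)}^{**}}^{\rho}$ is also linear, and therefore
$$AB_{m}^{\rho}(A)(x_{1}^{**},\ldots,x_{m}^{**})(y^{*}) = \sum_{k=1}^{n} y^{*}(g_{k})\, AB_{m}^{\rho}(\phi_{k})(x_{1}^{**},\ldots,x_{m}^{**}),$$
where each $AB_{m}^{\rho}(\phi_{k})(x_{1}^{**},\ldots,x_{m}^{**})$ is a scalar. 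Rewriting the right-hand side as $y^{*}$ evaluated at a vector yields
$$AB_{m}^{\rho}(A)(x_{1}^{**},\ldots,x_{m}^{**}) = J_{G}\Bigl(\sum_{k=1}^{n} AB_{m}^{\rho}(\phi_{k})(x_{1}^{**},\ldots,x_{m}^{**})\, g_{k}\Bigr)$$
in $G^{**}$. Hence the range of $AB_{m}^{\rho}(A)$ lies in the finite-dimensional subspace $J_{G}(\mathrm{span}\{g_{1},\ldots,g_{n}\})$ of $G^{**}$, so $AB_{m}^{\rho}(A)$ has finite rank. The final sentence of the proposition then follows from the observation recalled just before the statement.

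There is no real obstacle here: the only delicate point is confirming that the iterated composition $\overline{x_{\rho(m)}^{**}}^{\rho}\circ\cdots\circ\overline{x_{\rho(1)}^{**}}^{\rho}$ is linear in the multilinear-form argument, but this is immediate from the defining formula following (\ref{defope}), which shows that each factor $\overline{x^{**}_{\rho(k)}}^{\rho}$ is a linear map between the relevant spaces of multilinear forms. Thus the proof is essentially a direct computation based on pulling the finite sum through the Aron-Berner construction.
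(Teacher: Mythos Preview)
Your proof is correct and follows essentially the same approach as the paper: both decompose $A$ as a finite sum $\sum_{k}\phi_{k}\otimes g_{k}$ of scalar $m$-linear forms tensored with vectors in $G$, then use the linearity of the operators $\overline{x^{**}_{\rho(k)}}^{\rho}$ to push this decomposition through the Aron--Berner construction and obtain $AB_{m}^{\rho}(A)=\sum_{k}AB_{m}^{\rho}(\phi_{k})\otimes J_{G}(g_{k})$. The paper carries out the scalar-factoring step for $\overline{x^{**}_{\rho(1)}}^{\rho}$ explicitly rather than just invoking linearity, but this is a cosmetic difference.
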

\begin{proof}
Given a finite rank operator $A\in\mathcal{L}(E_{1},\ldots,E_{m};G)$, there are  $k\in\mathbb{N}$, $B_{j}\in\mathcal{L}(E_{1},\ldots,E_{m})$ and $y_{j}\in G, j=1,\ldots,k$, such that $A = \sum\limits_{j=1}^k B_j \otimes y_j$, meaning that $A(x_{1},\ldots,x_{m})=\sum\limits_{j=1}^{k}B_{j}(x_{1},\ldots,x_{m})y_j$. 
For $\rho\in S_{m}$, $x_{\rho(1)}^{\ast\ast}\in E_{\rho(1)}^{\ast\ast}$ and $y^{\ast}\in G^{\ast}$, putting $C_{j}=y^{\ast}\circ  (B_{j}\otimes y_{j})$ we get
\begin{align*}
\overline{x_{\rho(1)}^{\ast\ast}}^{\rho}&(C_{j})(x_{1},\ldots,_{\rho(1)}x,\ldots,x_{m})=x_{\rho(1)}^{\ast\ast}(C_{j}(x_{1},\ldots,_{\rho(1)}x,\bullet,\ldots,x_{m}))\\
&=
x_{\rho(1)}^{\ast\ast}(y^{\ast}(y_{j}) B_{j}(x_{1},\ldots,_{\rho(1)}x,\bullet,\ldots,x_{m}))\\
&=y^{\ast}(y_{j}) x_{\rho(1)}^{\ast\ast}(B_{j}(x_{1},\ldots,_{\rho(1)}x,\bullet,\ldots,x_{m}))=y^{\ast}(y_{j}) \overline{x_{\rho(1)}^{\ast\ast}}^{\rho}(B_{j})(x_{1},\ldots,_{\rho(1)}x,\ldots,x_{m}),
\end{align*}
that is, $\overline{x_{\rho(1)}^{\ast\ast}}^{\rho}(C_{j})=y^{\ast}(y_{j}) \overline{x_{\rho(1)}^{\ast\ast}}^{\rho}(B_{j})$. Since $y^{\ast}\circ A= 
\sum\limits_{j=1}^{k}C_{j}$, for $x_{\rho(i)}^{\ast\ast}\in E_{\rho(i)}^{\ast\ast}, i=2,\ldots,m$,
\begin{align*}
AB&_{m}^{\rho}(A)(x_{1}^{\ast\ast},\ldots,x_{m}^{\ast\ast})(y^{\ast})=\big(\overline{x_{\rho(m)}^{\ast\ast}}^{\rho}\circ\cdots\circ \overline{x_{\rho(1)}^{\ast\ast}}^{\rho}\big)(y^{\ast}\circ A)\\
&=\big(\overline{x_{\rho(m)}^{\ast\ast}}^{\rho}\circ\cdots\circ \overline{x_{\rho(2)}^{\ast\ast}}^{\rho}\big)\Big(\sum_{j=1}^{k}\overline{x_{\rho(1)}^{\ast\ast}}^{\rho}(C_{j})\Big)=\displaystyle\sum_{j=1}^{k}y^{\ast}(y_{j}) \big(\overline{x_{\rho(m)}^{\ast\ast}}^{\rho}\circ\cdots\circ \overline{x_{\rho(2)}^{\ast\ast}}^{\rho}\big)\big(\overline{x_{\rho(1)}^{\ast\ast}}^{\rho}(B_{j})\big)\\
&=\displaystyle\sum_{j=1}^{k}y^{\ast}(y_{j})AB_{m}^{\rho}(B_{j})(x_{1}^{\ast\ast},\ldots,x_{m}^{\ast\ast})=\displaystyle\sum_{j=1}^{k}AB_{m}^{\rho}(B_{j})\otimes J_{F}(y_{j})(x_{1}^{\ast\ast},\ldots,x_{m}^{\ast\ast})(y^{\ast}).
\end{align*}
It follows that $AB_{m}^{\rho}(A)= \sum\limits_{j=1}^{k}AB_{m}^{\rho}(B_{j})\otimes J_{F}(y_{j})$ is a finite rank operator. 
\end{proof}

Now we proceed to the second situation. Making the identification $J_F(F) = F$, a bidual extension of an $F$-valued linear or multilinear operator is {\it genuine} if it is $F$-valued as well. For example, the second adjoint $T^{**}$ of a weakly compact linear operator $T$ is a genuine bidual extension of $T$. Genuine Aron-Berner extensions of multilinear operators have already been studied in the literature (see \cite{bovi, peralta}).

\begin{proposition}\label{propo6}
Let $E_{1},\ldots,E_{m}$ be Banach lattices whose duals have the positive Schur property and let  $F$ be a Banach lattice with order continuous norm. If a positive $m$-linear operator $A \colon E_{1} \times \cdots \times E_{m} \longrightarrow F$ admits a positive genuine separately weak$^*$-weak$^*$ continuous bidual extension $\widetilde{A} \colon E_{1}^{\ast\ast} \times \cdots \times E_{m}^{\ast\ast} \longrightarrow F^{\ast\ast}$, then  $\widetilde{A}$ is separately almost Dunford-Pettis. In particular, $A$ is separately almost Dunford-Pettis.
\end{proposition}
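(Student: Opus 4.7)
My strategy is to reduce, via standard multilinear manipulations, to a statement about the bi-adjoint of a positive weakly compact linear operator, and then handle that statement by a compactness argument using the positive Schur property of $E_j^{\ast}$ together with the order continuity of the norm of $F$.

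Fix $j\in\{1,\ldots,m\}$ and $x_i^{\ast\ast}\in E_i^{\ast\ast}$ for each $i\neq j$. Writing $x_i^{\ast\ast}=(x_i^{\ast\ast})^{+}-(x_i^{\ast\ast})^{-}$ and using the $m$-linearity of $\widetilde{A}$, the partial operator
\[ T\colon E_j^{\ast\ast}\longrightarrow F^{\ast\ast},\qquad T(x_j^{\ast\ast})=\widetilde{A}(x_1^{\ast\ast},\ldots,x_j^{\ast\ast},\ldots,x_m^{\ast\ast}),\]
decomposes as a signed sum of $2^{m-1}$ operators of the same form, each with all the remaining arguments positive. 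Since finite linear combinations of almost Dunford-Pettis operators are almost Dunford-Pettis, it suffices to treat one such summand. There, positivity of $\widetilde{A}$ makes $T$ positive, genuineness places $T(E_j^{\ast\ast})\subseteq F$, and the $w^{\ast}$-$w^{\ast}$ continuity of $\widetilde{A}$ in the $j$-th slot becomes $w^{\ast}$-weak continuity of $T$ into $F$ (because the $w^{\ast}$ topology of $F^{\ast\ast}$ restricts to the weak topology on the isometric copy of $F$). Setting $S=T\circ J_{E_j}\colon E_j\to F$, a positive linear operator, a Goldstine argument combined with the $w^{\ast}$-weak continuity of $T$ identifies $T$ with $S^{\ast\ast}$ (with values actually in $F$); equivalently, $S$ is weakly compact.

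To show that $T=S^{\ast\ast}$ is almost Dunford-Pettis, I would aim for compactness of $S^{\ast}\colon F^{\ast}\to E_j^{\ast}$: by Schauder this yields compactness of $S$ and of $T=S^{\ast\ast}$, and compact operators are automatically Dunford-Pettis, in particular almost Dunford-Pettis. Decomposing $y^{\ast}\in B_{F^{\ast}}$ as $y^{\ast+}-y^{\ast-}$ with $\|y^{\ast\pm}\|\leq\|y^{\ast}\|$ yields
\[ S^{\ast}(B_{F^{\ast}})\subseteq S^{\ast}(B_{F^{\ast}}\cap F^{\ast}_{+})-S^{\ast}(B_{F^{\ast}}\cap F^{\ast}_{+}),\]
so everything reduces to showing that the positive, weakly relatively compact set $S^{\ast}(B_{F^{\ast}}\cap F^{\ast}_{+})\subseteq(E_j^{\ast})_{+}$ is norm relatively compact. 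This is the step where the positive Schur property of $E_j^{\ast}$ should be combined with the OCN of $F$ and the positivity of $S^{\ast}$ to produce the required compactness upgrade.

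The \emph{In particular} assertion then follows from $A=\widetilde{A}\circ(J_{E_1},\ldots,J_{E_m})$ and the fact that each $J_{E_i}$ is an isometric Riesz homomorphism, preserving disjointness and weakly null sequences. The main obstacle in this plan is the final compactness step: the positive Schur property alone does not force every positive weakly relatively compact subset of $E_j^{\ast}$ to be norm relatively compact (the Rademacher functions in $L^{1}$ give a positive weakly compact set that is not norm compact in the PSP lattice $L^{1}$), so the careful joint use of PSP of $E_j^{\ast}$, OCN of $F$, and positivity of $S^{\ast}$ is what must be worked out and is the technical heart of the argument.
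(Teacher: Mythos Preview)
Your reduction to the linear case is correct and matches the paper's approach exactly: decompose the frozen arguments into positive and negative parts, observe that the resulting partial map $T$ on $E_j^{\ast\ast}$ is the bi-adjoint of the positive operator $S=T\circ J_{E_j}\colon E_j\to F$, and use genuineness together with the separate $w^{\ast}$-$w^{\ast}$ continuity to see that $S$ is weakly compact. Up to this point your argument and the paper's are essentially identical.

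The gap is precisely where you locate it. Your plan to deduce that $S^{\ast\ast}$ is almost Dunford--Pettis by proving that $S^{\ast}$ (hence $S$) is \emph{compact} does not go through: as your own Rademacher example shows, a positive weakly relatively compact subset of a Banach lattice with the positive Schur property need not be norm relatively compact, so the inclusion $S^{\ast}(B_{F^{\ast}}\cap F^{\ast}_{+})\subseteq (E_j^{\ast})_{+}$ does not buy compactness from PSP alone. There is no evident way to exploit the order continuity of the norm of $F$ at this point to rescue the argument, because OCN of $F$ gives you information about $F$ and $F^{\ast}$, not about the geometry of the image set inside $E_j^{\ast}$.

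The paper does \emph{not} attempt to prove compactness of $S$. Instead it invokes a separate linear result (Proposition~5.5 of the companion paper \cite{lg2}) which asserts directly that if $E^{\ast}$ has the positive Schur property, $F$ has order continuous norm, and $T\colon E\to F$ is positive and weakly compact, then $T^{\ast\ast}$ is almost Dunford--Pettis. That result is the technical heart you are missing; its proof does not pass through compactness of $T$, so your strategy would need to be replaced rather than patched. Once that lemma is available, the proof concludes exactly as you outline.
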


\begin{proof} Let $i\in\{1,\ldots,m\}$ and positive functionals $z_{j}^{\ast\ast}\in E_{j}^{\ast\ast}, j=1,\ldots,m$, $j\neq i$, be given. Considering the inverse  $J_{F}^{-1}\colon J_{F}(F)\longrightarrow F$ of the canonical embedding, consider the positive linear operator
$$T\colon E_{i}\longrightarrow F~,~T(x_{i})=J_{F}^{-1}(\widetilde{A}(z_{1}^{\ast\ast},\ldots,z_{i-1}^{\ast\ast},J_{E_{i}}(x_{i}),z_{i+1}^{\ast\ast},\ldots,z_{m}^{\ast\ast})),$$
which is well defined because the bidual extension $\widetilde{A}$ is genuine. 
 Given $x_{i}^{\ast\ast}\in E_{i}^{\ast\ast}$ and a net $(x_{\alpha_{i}})_{\alpha_{i}\in \Omega_{i}}$ in $E_{i}$ such that $x_{i}^{\ast\ast}=\omega^{\ast}$-$\displaystyle\lim_{\alpha_{i}}J_{E_{i}}(x_{\alpha_{i}})$, 
 the separate weak$^*$-weak$^*$ continuity of $\widetilde{A}$ gives
\begin{align*}
T^{\ast\ast}(x_{i}^{\ast\ast})&=\omega^{\ast}{\rm -}\displaystyle\lim_{\alpha_{i}} T^{\ast\ast}(J_{E_{i}}(x_{\alpha_{i}}))=\omega^{\ast}{\rm -}\displaystyle\lim_{\alpha_{i}} (T^{\ast\ast}\circ J_{E_{i}})(x_{\alpha_{i}})\\
&=\omega^{\ast}{\rm -}\displaystyle\lim_{\alpha_{i}} J_{F}(T(x_{\alpha_{i}}))=\omega^{\ast}{\rm }-\displaystyle\lim_{\alpha_{i}}\widetilde{A}(z_{1}^{\ast\ast},\ldots,z_{i-1}^{\ast\ast},J_{E_{i}}(x_{\alpha_{i}}),z_{i+1}^{\ast\ast},\ldots,z_{m}^{\ast\ast})\\
&=\widetilde{A}(z_{1}^{\ast\ast},\ldots,z_{i-1}^{\ast\ast},x_{i}^{\ast\ast},z_{i+1}^{\ast\ast},\ldots,z_{m}^{\ast\ast})\subseteq J_{F}(F),
\end{align*}
showing that $T$ is weakly compact, hence $T^{\ast\ast}$ is almost Dunford-Pettis by \cite[Proposition 5.5]{lg2}. So, given a disjoint weakly null sequence $(x_{i,n}^{\ast\ast})_{n=1}^{\infty}$ in $E_{i}^{\ast\ast}$ we have 
\begin{equation}\label{rt6y}\|\widetilde{A}(z_{1}^{\ast\ast},\ldots,z_{i-1}^{\ast\ast},x_{i,n}^{\ast\ast},z_{i+1}^{\ast\ast},\ldots,z_{m}^{\ast\ast})\|=\|T^{\ast\ast}(x_{i,n }^{\ast\ast})\|\longrightarrow 0.\end{equation}
For given $x_{j}^{\ast\ast}\in E_{j}^{\ast\ast}, j=1,\ldots,m$, $j\neq i$, write $x_{j}^{\ast\ast}=(x_{j}^{\ast\ast})^{+}-(x_{j}^{\ast\ast})^{-}$. For each  $\xi^{m}=(\xi_{1},\ldots,\xi_{i-1},\xi_{i+1},\ldots,\xi_{m}) \in \{+1, -1\}^{m-1}$, put $P(\xi^{m})=\xi_{1}\cdots\xi_{i-1}\xi_{i+1}\cdots\xi_{m}\in \{+1,-1\}$. Since $(x_{j}^{\ast\ast})^{+}$ and $(x_{j}^{\ast\ast})^{-}$ are positive functionals, by (\ref{rt6y}) we get
\begin{align*}
&\|\widetilde{A}(x_{1}^{\ast\ast},\ldots,x_{i-1}^{\ast\ast},x_{i,n}^{\ast\ast},x_{i+1}^{\ast\ast},\ldots,x_{m}^{\ast\ast})\|\\
&=\|\widetilde{A}((x_{1}^{\ast\ast})^{+}\hspace{-0.1cm}\hspace{-0.1cm}-(x_{1}^{\ast\ast})^{-},\ldots,(x_{i-1}^{\ast\ast})^{+}\hspace{-0.1cm}-(x_{i-1}^{\ast\ast})^{-},x_{i,n}^{\ast\ast},(x_{i+1}^{\ast\ast})^{+}\hspace{-0.1cm}-(x_{i+1}^{\ast\ast})^{-},\ldots,(x_{m}^{\ast\ast})^{+}\hspace{-0.1cm}-(x_{m}^{\ast\ast})^{-})\|\\
&=\Big\|\sum_{P(\xi^{m})=+1} \widetilde{A}((x_{1}^{\ast\ast})^{\xi_{1}},\ldots,(x_{i-1}^{\ast\ast})^{\xi_{i-1}},x_{i,n}^{\ast\ast},(x_{i+1}^{\ast\ast})^{\xi_{i+1}},\ldots,(x_{m}^{\ast\ast})^{\xi_{m}})\\
&\quad\quad-\sum_{P(\xi^{m})=-1} \widetilde{A}((x_{1}^{\ast\ast})^{\xi_{1}},\ldots,(x_{i-1}^{\ast\ast})^{\xi_{i-1}},x_{i,n}^{\ast\ast},(x_{i+1}^{\ast\ast})^{\xi_{i+1}},\ldots,(x_{m}^{\ast\ast})^{\xi_{m}})\Big\|\\
&\leq\sum_{P(\xi^{m})=+1} \|\widetilde{A}((x_{1}^{\ast\ast})^{\xi_{1}},\ldots,(x_{i-1}^{\ast\ast})^{\xi_{i-1}},x_{i,n}^{\ast\ast},(x_{i+1}^{\ast\ast})^{\xi_{i+1}},\ldots,(x_{m}^{\ast\ast})^{\xi_{m}})\|\\
&\quad\quad +\sum_{P(\xi^{m})=-1} \|\widetilde{A}((x_{1}^{\ast\ast})^{\xi_{1}},\ldots,(x_{i-1}^{\ast\ast})^{\xi_{i-1}},x_{i,n}^{\ast\ast},(x_{i+1}^{\ast\ast})^{\xi_{i+1}},\ldots,(x_{m}^{\ast\ast})^{\xi_{m}})\|\longrightarrow 0.
\end{align*}
This proves that 
$\widetilde{A}$ is separately almost Dunford-Pettis.
\end{proof}

In order to give concrete applications of Proposition \ref{propo6}, recall that a Banach space $E$ is {\it Arens regular} if every bounded linear operator from $E$ to $E^*$ is weakly compact.

\begin{corollary}\label{corsepadp} Let $E, F$ be Banach lattices such that $E$ is Arens-regular, $E^{\ast}$ has the positive Schur property, $F$ has order continuous norm and every bounded linear operator from $E$ to $F$ is weakly compact. Then, for every $m \in \mathbb{N}$, the Aron-Berner extensions of any positive $m$-linear operator from $E^m$ to  $F$ are separately almost 
Dunford-Pettis.
\end{corollary}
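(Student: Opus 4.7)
The plan is to reduce to Proposition \ref{propo6} applied with $E_{1}=\cdots=E_{m}=E$. Three of the four hypotheses of Proposition \ref{propo6} are immediate from the corollary's assumptions: $E^{*}$ has the positive Schur property, $F$ has order continuous norm, and the given operator $A$ is positive. What remains is to establish the fourth hypothesis, namely that every positive $m$-linear operator $A \colon E^{m} \longrightarrow F$ admits a positive, genuine, separately weak$^{*}$-weak$^{*}$ continuous bidual extension $\widetilde{A}$.

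To construct $\widetilde{A}$ I would follow the Aron-Berner recipe \eqref{defope} for an arbitrarily fixed $\rho \in S_{m}$ and build $AB_{m}^{\rho}(A)$ one variable at a time. Two kinds of associated linear operators arise in the iterated composition $\overline{x_{\rho(m)}^{**}}^{\rho}\circ\cdots\circ\overline{x_{\rho(1)}^{**}}^{\rho}$. Those obtained from an element $y^{*}\in F^{*}$ by composition and partial evaluation take the form $E \to E^{*}$ and hence are weakly compact by the Arens-regularity of $E$; this weak compactness is precisely what forces the order of extension to be irrelevant (so all $AB_{m}^{\rho}(A)$ coincide across $\rho \in S_{m}$) and what produces separate weak$^{*}$-weak$^{*}$ continuity in the intermediate variables. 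The operators $E \to F$ arising from fixing $m-1$ arguments of $A$ are weakly compact by the last hypothesis, so their second adjoints land in $J_{F}(F)$; this is what makes $\widetilde{A}$ genuine. Positivity propagates through the whole construction because the second adjoint of a positive linear operator is positive and the canonical embeddings are Riesz homomorphisms.

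Once $\widetilde{A}$ has been produced, the $m$-linear analogue of the bilinear result from \cite{peralta} cited in Section \ref{sec2} shows that every Aron-Berner extension of $A$ coincides with $\widetilde{A}$, and Proposition \ref{propo6} then yields that $\widetilde{A}$ is separately almost Dunford-Pettis, so all Aron-Berner extensions of $A$ enjoy the same property. The main obstacle I anticipate is the combinatorial bookkeeping needed to verify that weak$^{*}$-weak$^{*}$ continuity, positivity, and the ``$F$-valued'' property are simultaneously propagated through all $m$ successive applications of the operators $\overline{x_{\rho(k)}^{**}}^{\rho}$ of \eqref{defope}; this amounts to the $m$-linear generalization of the bilinear criterion \cite[Theorem 1]{bovi}, which is routine but somewhat tedious.
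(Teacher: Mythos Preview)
Your proposal is correct and follows essentially the same route as the paper: reduce to Proposition~\ref{propo6} by checking that the Aron--Berner extensions are positive, genuine, and separately weak$^*$-weak$^*$ continuous, the first being automatic, the second following from the weak compactness hypothesis on $\mathcal{L}(E;F)$, and the third from Arens regularity of $E$. The paper simply cites \cite{bovi,peralta} for these facts as ``well known'' rather than sketching the iterated construction you outline, but the logical structure is the same.
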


\begin{proof} Let $A \colon E^m \longrightarrow F$ be a positive $m$-linear operator. It is well known that the Arens regularity of $E$ gives that all Aron-Berner extensions of $A$ coincide and are separately weak$^*$-weak$^*$ continuous. It is also well known that if every operator from $E$ to $F$ is weakly compact, then such extension is genuine (all this information can be found in \cite{bovi, peralta}). Now the result follows from Proposition \ref{propo6}.
\end{proof}

\begin{example}\rm Bounded linear operators from $c_0$ to $\ell_1$ are compact (Pitt's Theorem), hence weakly compact, so $c_0$ is Arens regular. It is clear that $c_0^* =\ell_1$ has the positive Schur property, so Corollary \ref{corsepadp} assures that, for every $m$, the Aron-Berner extensions of every positive $m$-linear operator $A \colon c_0^m \longrightarrow \ell_1$ are separately almost Dunford-Pettis.
\end{example}

\section{Almost Dunford-Pettis multilinear operators}\label{sec3}

Unless stated otherwise, throughout this section $E_{1},\ldots, E_{m}, F$ are Banach lattices and $G$ is a Banach space. From the definition (cf. Introduction), it follows easily that an $m$-linear operator $A\colon E_{1}\times\cdots\times E_{m}\longrightarrow G$ is almost Dunford-Pettis if and only if $(A(x_{1,n}, \ldots, x_{m,n}))_{n=1}^\infty$ is norm null in $G$ whenever $(x_{j,n})_{n=1}^\infty$ is a {\it positive} disjoint weakly null sequence in $E_j$, $j = 1, \ldots, m$.

We start by showing that none of the following implications holds in general:
$$A \mbox{ is separately almost Dunford-Pettis} \Longleftrightarrow A \mbox{ is  almost Dunford-Pettis}. $$

\begin{example}\rm On the one hand, for $1< p < \infty$, the positive bilinear form $A\colon \ell_{p}\times \ell_{p^{\ast}}\longrightarrow \mathbb{R}$, $ A(x,y)=\sum\limits_{n=1}^{\infty}x_{n}y_{n}$, is obvious separately almost Dunford-Pettis, but it is not almost Dunford-Pettis:   $e_{n}\xrightarrow{\,\, \omega\,\,} 0$ in $\ell_{p}$ and in $\ell_{p^{\ast}}$ and $A(e_{n},e_{n})=1$ for every $n\in\mathbb{N}$.

 On the other hand, the positive bilinear operator $A\colon \mathbb{R}\times \ell_{2}\longrightarrow \ell_{\infty}$, $ A(\lambda,x)=\lambda x$, is obviously almost Dunford-Pettis, but it is not separately almost Dunford-Pettis: $e_{n}\xrightarrow{\,\, \omega\,\,} 0$ in $\ell_{2}$ but $\|A(1,e_n)\| = 1$ for every $n$.

As the operators in this example are defined on reflexive spaces, Aron-Berner extensions of separately almost Dunford-Pettis are not always almost Dunford-Pettis and vice-versa.
\end{example}

          Among other consequences, the next lemma establishes that only Aron-Berner extensions of almost Dunford-Pettis operators can be almost Dunford-Pettis and will enable us to show that Aron-Berner extensions of almost Dunford-Pettis operators are not always almost Dunford-Pettis.

\begin{lemma}\label{le2}
 Let $A\colon E_{1}\times\cdots\times E_{m}\longrightarrow G$ be a continuous $m$-linear operator.\\
{\rm (a)} If $A$ admits an almost Dunford-Pettis bidual extension, then $A$ is almost Dunford Pettis as well.\\
{\rm (b)} If some of the Banach lattices $E_{1},\ldots, E_{m}$ has the positive Schur property, then $A$ is almost Dunford-Pettis.
\end{lemma}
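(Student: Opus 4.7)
The plan is to use the simplified characterization of almost Dunford--Pettis multilinear operators recalled immediately before the lemma: it suffices to verify that $A(x_{1,n},\ldots,x_{m,n})\to 0$ in norm whenever each $(x_{j,n})_n$ is a \emph{positive} disjoint weakly null sequence in $E_j$. Both parts will follow quickly from this reduction.

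For part (b), I would assume that some $E_{j_0}$ has the positive Schur property and fix positive disjoint weakly null sequences $(x_{j,n})_n$ in each $E_j$. The positive Schur property will force $\|x_{j_0,n}\|\to 0$, while the remaining sequences are norm bounded because they are weakly null. Continuity of $A$ then gives the multilinear estimate
$$\|A(x_{1,n},\ldots,x_{m,n})\|\leq \|A\|\prod_{j=1}^{m}\|x_{j,n}\|\longrightarrow 0,$$
a single factor tending to zero being enough to collapse the product.

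For part (a), given an almost Dunford--Pettis bidual extension $\widetilde{A}\colon E_{1}^{\ast\ast}\times\cdots\times E_{m}^{\ast\ast}\longrightarrow G^{\ast\ast}$ of $A$ and positive disjoint weakly null sequences $(x_{j,n})_n$ in $E_j$, I would first observe that each $(J_{E_j}(x_{j,n}))_n$ is positive and pairwise disjoint in $E_j^{\ast\ast}$, because $J_{E_j}$ is a Riesz homomorphism (a fact already used in the paper). Next, the weak-to-weak continuity of bounded linear operators transfers weak nullity from $E_j$ to $E_j^{\ast\ast}$, so $(J_{E_j}(x_{j,n}))_n$ is a positive disjoint weakly null sequence in $E_j^{\ast\ast}$. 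Applying the almost Dunford--Pettis property of $\widetilde{A}$ and invoking the defining relation $\widetilde{A}\circ(J_{E_1},\ldots,J_{E_m})=J_G\circ A$ together with the fact that $J_G$ is an isometry, I conclude
$$\|A(x_{1,n},\ldots,x_{m,n})\|=\|\widetilde{A}(J_{E_1}(x_{1,n}),\ldots,J_{E_m}(x_{m,n}))\|\longrightarrow 0,$$
showing that $A$ is almost Dunford--Pettis.

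I do not expect either part to present a substantive obstacle; the only point deserving explicit verification is the weak-null transfer through $J_{E_j}$, which is immediate from the automatic weak-to-weak continuity of bounded linear maps (equivalently, from the fact that every $\varphi\in E_j^{\ast\ast\ast}$ satisfies $\varphi\circ J_{E_j}\in E_j^{\ast}$).
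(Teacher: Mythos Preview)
Your proposal is correct and follows essentially the same approach as the paper: for (a) you use the bidual-extension identity $\widetilde{A}\circ(J_{E_1},\ldots,J_{E_m})=J_G\circ A$, the Riesz-homomorphism property of $J_{E_j}$, and the isometry of $J_G$, while for (b) you use the positive Schur property to kill one factor and bound the rest. The only difference is that you spell out the weak-to-weak continuity of $J_{E_j}$ explicitly, whereas the paper leaves this implicit in the phrase ``follows easily.''
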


\begin{proof} (a) Let $\widetilde{A}$ be an almost Dunford-Pettis bidual extension of $A$. The result follows easily from the equality $\widetilde{A} \circ (J_{E_1}, \ldots, J_{E_m}) = J_G \circ A$ and the fact that the canonical operators $J_{E_i}$ are Riesz homomorphisms and $J_G$ is an isometric embedding. \\
(b) Let $j\in\{1,\ldots,m\}$ be such that $E_{j}$ has the positive Schur property and let $(x_{i,n})_{n=1}^{\infty}$ be disjoint positive weakly null sequences in  $E_{i}$, $i = 1,\ldots,m$. Then these sequences are bounded, say $\|x_{i,n}\|\leq M_i$ for every $n$, and $\|x_{j,n}\|\longrightarrow 0$. Therefore, 
$$\|A(x_{1,n},\ldots,x_{m,n})\|\leq  M_{1}\cdots M_{j-1}M_{j+1}\cdots M_{m}\cdot \|A\|\cdot\|x_{j,n}\|\longrightarrow 0.$$
\end{proof}

\begin{theorem}\label{propo3}
 Let $E_{1},\ldots,E_{m}, F$ be Banach lattices with $F$ $\sigma$-Dedekind complete containing a copy of $\ell_{\infty}$. The following are equivalent.\\
{\rm (a)} Every continuous $m$-linear operator $A \colon E_{1} \times \cdots \times E_{m} \longrightarrow F$ admits an almost Dunford-Pettis Aron-Berner extension.\\
{\rm (b)} Every regular $m$-linear operator $A \colon E_{1} \times \cdots \times E_{m} \longrightarrow F$ admits an almost Dunford-Pettis Aron-Berner extension.\\
{\rm (c)} Every positive $m$-linear operator $A \colon E_{1} \times \cdots \times E_{m} \longrightarrow F$ admits an almost Dunford-Pettis Aron-Berner extension.\\
{\rm (d)} $E_{j}^{\ast\ast}$ has the positive Schur property for some $j\in\{1,\ldots,m\}$.
\end{theorem}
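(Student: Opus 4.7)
The plan is to prove the cycle (a)$\Rightarrow$(b)$\Rightarrow$(c)$\Rightarrow$(d)$\Rightarrow$(a). The implications (a)$\Rightarrow$(b)$\Rightarrow$(c) are immediate since every positive $m$-linear operator is regular and every regular one is continuous. For (d)$\Rightarrow$(a), if $E_j^{**}$ has the positive Schur property for some $j$, then Lemma \ref{le2}(b) applied directly to any Aron-Berner extension $AB_m^\rho(A)\colon E_1^{**}\times\cdots\times E_m^{**}\to F^{**}$ yields that this extension is almost Dunford-Pettis, for every $\rho\in S_m$.

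The substance lies in (c)$\Rightarrow$(d), which I would prove by contrapositive. Assuming every $E_j^{**}$ fails the positive Schur property, for each $j$ I would extract a disjoint positive weakly null sequence $(x_{j,n}^{**})_{n=1}^\infty$ in $E_j^{**}$ with $\|x_{j,n}^{**}\|\geq \delta > 0$ uniformly (after passing to a subsequence). Since $x_{j,n}^{**}\geq 0$, its norm is attained as a supremum of $x_{j,n}^{**}(\varphi)$ over positive $\varphi \in B_{E_j^*}$, so I can pick positive $\varphi_{j,n}\in B_{E_j^*}$ with $x_{j,n}^{**}(\varphi_{j,n})\geq \delta/2$. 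Using the hypothesis on $F$ to fix a Banach sublattice embedding $\iota\colon \ell_\infty\hookrightarrow F$ with canonical basis $(e_n)_{n=1}^\infty$, I would then define the positive continuous $m$-linear operator
$$A\colon E_1\times \cdots \times E_m \longrightarrow F,\qquad A(x_1,\ldots,x_m)=\iota\bigl((\varphi_{1,n}(x_1)\cdots \varphi_{m,n}(x_m))_{n\in\mathbb{N}}\bigr),$$
and for each $n$ extend the coordinate functional $e_n^*\colon\ell_\infty\to\mathbb{R}$ to some $y_n^*\in F^*$ with $\|y_n^*\|\leq 1$ by Hahn-Banach, so that $(y_n^*\circ A)(x_1,\ldots,x_m)=\prod_{j=1}^m \varphi_{j,n}(x_j)$ is a pure tensor of functionals.

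Iterating the formula defining $\overline{x_{\rho(k)}^{**}}^\rho$ on such a tensor peels off one factor $x_{\rho(k)}^{**}(\varphi_{\rho(k),n})$ at each step, yielding, for every permutation $\rho$,
$$AB_m^\rho(A)(x_1^{**},\ldots,x_m^{**})(y_n^*)=\prod_{j=1}^m x_j^{**}(\varphi_{j,n}).$$
Evaluating at the diagonal gives $AB_m^\rho(A)(x_{1,n}^{**},\ldots,x_{m,n}^{**})(y_n^*)\geq (\delta/2)^m$, and since $\|y_n^*\|\leq 1$ this forces $\|AB_m^\rho(A)(x_{1,n}^{**},\ldots,x_{m,n}^{**})\|_{F^{**}}\geq (\delta/2)^m$ for all $n$, contradicting the almost Dunford-Pettisness of the extension tested against the disjoint positive weakly null sequences $(x_{j,n}^{**})_n$.

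The main obstacle I expect is the structural justification that under the hypotheses on $F$ there is a Banach sublattice copy of $\ell_\infty$ inside $F$ (so that $A$ is genuinely positive and Hahn-Banach furnishes $y_n^*$ with $\|y_n^*\|\leq 1$); this is where the combination of $\sigma$-Dedekind completeness and containing a copy of $\ell_\infty$ should be used in full. By contrast, the Aron-Berner computation itself is painless because $y_n^*\circ A$ is a single pure tensor of $m$ functionals, so the order in which the slots are integrated out is irrelevant and the induction on $k$ goes through for any $\rho$.
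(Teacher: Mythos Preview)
Your proof is correct and follows the same overall strategy as the paper: the easy implications are handled identically, and for (c)$\Rightarrow$(d) you also argue by contrapositive, building a positive $m$-linear operator whose ``$n$-th coordinate'' is the product $\prod_j \varphi_{j,n}(x_j)$ and testing its Aron-Berner extensions on the bad diagonal sequences. The execution differs slightly: the paper does not invoke a lattice embedding of $\ell_\infty$ into $F$; instead it uses that $F$ has non-order-continuous norm to extract a disjoint order-bounded normalized sequence $(y_n)$ in $F$, defines $A$ via order limits of the partial sums $\sum_{n\le k} y_n\prod_j x_{j,n}^*(x_j)$ (extended off the positive cone by the multilinear Kantorovich theorem), and then pushes the inequality $B(x_1,\ldots,x_m)\ge y_n\prod_j x_{j,n}^*(x_j)$ through the Aron-Berner iteration using a positive $y^*\in F^*$. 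Your route via a sublattice copy of $\ell_\infty$ and Hahn-Banach-extended coordinate functionals is cleaner at the Aron-Berner step (an exact equality on a pure tensor rather than a chain of inequalities), at the cost of quoting the structural fact you flag: $\sigma$-Dedekind completeness plus a Banach-space copy of $\ell_\infty$ forces a lattice copy of $\ell_\infty$ (non-order-continuous norm, then the Lozanovski\u{\i}/Meyer-Nieberg characterization). One minor point: your $y_n^*$ need not have norm $\le 1$ unless the lattice embedding is isometric, but they are uniformly bounded by $\|\iota^{-1}\|$, which is all you need.
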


\begin{proof} (a)$\Rightarrow$(b)$\Rightarrow$(c) are trivial and (d)$\Rightarrow$(a) follows from Lemma \ref{le2}(b). Let us prove (c)$\Rightarrow$(d). Suppose that $E_{1}^{\ast\ast}, \ldots, E_{m}^{\ast\ast}$ fail the positive Schur property. By \cite[Proposition 2.1]{wic}, for each $j=1,\ldots,m$, there is a positive disjoint weakly null normalized sequence $(x_{j,n}^{\ast\ast})_{n=1}^{\infty}$ in $E_{j}^{\ast\ast}$. 
Take  positive sequences $(x_{j,n}^{\ast})_{j=1}^{\infty}$ in $B_{E_{j}^{\ast}}$ such that que $x_{j,n}^{\ast\ast}(x_{j,n}^{\ast})\geq \frac{1}{2}$ for all $j = 1, \ldots, m$ and $n \in \mathbb{N}$. From \cite[Theorem 4.56]{positiveoperators} we know that the norm of $F$ is not order continuous and from \cite[Theorem 2.4.2]{nieberg} that there are $y\in F$  and a disjoint sequence  $(y_{n})_{n=1}^{\infty}$ in $F$ such that $0\leq y_{n}\leq y$ and $\|y_{n}\|=1$ for every $n$. For  $x_{j}\in E_{j}^{+}$ and $k\in\mathbb{N}$, putting $s_{k}=\sum\limits_{n=1}^{k}\left(y_{n}\cdot \prod\limits_{j=1}^{m} x_{j,n}^{\ast}(x_{j})\right)$, we have $0\leq s_{k}\uparrow$ and from
$$0\leq s_{k}\leq \sum_{n=1}^{k} \left(y_{n}\cdot \prod_{j=1}^{m}\|x_{j,n}^{\ast}\|\cdot\|x_{j}\|\right)\leq \prod_{j=1}^{m}\|x_{j}\|\cdot \sum_{n=1}^{k} y_{n}=\prod_{j=1}^{m}\|x_{j}\|\cdot \bigvee_{n=1}^{k}y_{n}\leq \prod_{j=1}^{m}\|x_{j}\| y,$$
 it follows that $s_{k}\uparrow z$ for some $z\in F^{+}$ because $F$ is $\sigma$-Dedekind complete. By \cite[Theorem 1.14]{alibur} we get that $z$ is the order limit of $(s_k)_{k=1}^\infty$, in symbols  $z = o{\rm -}\displaystyle\lim_{k} s_{k}$. So, the map
$$A\colon E_{1}^{+}\times\cdots\times E_{m}^{+} \longrightarrow F^{+}~,~ A(x_{1},\ldots,x_{m})=o{\rm -}\lim_{k}\sum_{n=1}^{k}\left(y_{n}\cdot \prod_{j=1}^{m} x_{j,n}^{\ast}(x_{j})\right)$$
is well defined. Calling on \cite[Theorem 1.14]{alibur} once again, the map $A$ is additive in each variable, so by the multilinear Kantorovich Theorem (see \cite[Theorem 2.3]{loane}) there exists a positive $m$-linear operator $B\colon E_{1}\times\cdots\times E_{m}\longrightarrow F$ that extends $A$. Let us see that all Aron-Berner extensions of $B$ are not almost Dunford-Pettis. Note that for $x_{j}\in E_{j}^{+}, j=1,\ldots,m$, and $n\in\mathbb{N}$,
$$B(x_{1},\ldots,x_{m})=A(x_{1},\ldots,x_{m})\geq  y_{n}x_{1,n}^{\ast}(x_{1})\cdots x_{m,n}^{\ast}(x_{m})\geq 0.$$
 Given a positive functional $y^{\ast}\in F^{\ast}$ and $\rho\in S_{m}$, putting $t_{\rho(l)}=x_{\rho(l),n}^{\ast}(x_{\rho(l)}),l=1,\ldots,m-1$, we have
\begin{align*}
(y^{\ast}\circ B(x_{1},& \ldots,_{\rho(1)}  x,\bullet,\ldots,x_{m}))(x_{\rho(1)})=y^{\ast}(B(x_{1},\ldots,x_{m}))\geq y^{\ast}(y_{n}x_{1,n}^{\ast}(x_{1})\cdots x_{m,n}^{\ast}(x_{m}))\\
&\geq  x_{1,n}^{\ast}(x_{1})\cdots x_{m,n}^{\ast}(x_{m})y^{\ast}(y_{n})\geq  x_{1,n}^{\ast}(x_{1})\cdots _{\rho(1)}t\cdots x_{m,n}^{\ast}(x_{m})y^{\ast}(y_{n})t_{\rho(1)},
\end{align*}
which gives $y^{\ast}\circ B(x_{1},\ldots,_{\rho(1)}x,\bullet,\ldots,x_{m}) \geq x_{1,n}^{\ast}(x_{1})\cdots _{\rho(1)}t\cdots x_{m,n}^{\ast}(x_{m})y^{\ast}(y_{n})x_{\rho(1),n}^{\ast}.$ So, for any positive functional $x_{\rho(1)}^{\ast\ast}\in E_{\rho(1)}^{\ast\ast}$,
\begin{align*}
\nonumber\overline{x_{\rho(1)}^{\ast\ast}}^{\rho}(y^{\ast}\circ B)(x_{1},\ldots,&_{\rho(1)}x,_{\rho(2)}x,\bullet,\ldots,x_{m})(x_{\rho(2)})=\overline{x_{\rho(1)}^{\ast\ast}}^{\rho}(y^{\ast}\circ B)(x_{1},\ldots,_{\rho(1)}x,\ldots,x_{m})\\
\nonumber&=x_{\rho(1)}^{\ast\ast}(y^{\ast}\circ B(x_{1},\ldots,_{\rho(1)}x,\bullet,\ldots,x_{m}))\\
&\geq x_{\rho(1)}^{\ast\ast}\big( x_{1,n}^{\ast}(x_{1})\cdots _{\rho(1)}t\cdots x_{m,n}^{\ast}(x_{m})y^{\ast}(y_{n})x_{\rho(1),n}^{\ast}\big)\\
\nonumber&= x_{1,n}^{\ast}(x_{1})\cdots _{\rho(1)}t\cdots x_{m,n}^{\ast}(x_{m})y^{\ast}(y_{n})x_{\rho(1)}^{\ast\ast}(x_{\rho(1),n}^{\ast})\\
\nonumber&= x_{1,n}^{\ast}(x_{1})\cdots _{\rho(1)}t_{\rho(2)}t\cdots x_{m,n}^{\ast}(x_{m})y^{\ast}(y_{n})x_{\rho(1)}^{\ast\ast}(x_{\rho(1),n}^{\ast})t_{\rho(2)}\\
\nonumber&= x_{1,n}^{\ast}(x_{1})\cdots _{\rho(1)}t_{\rho(2)}t\cdots x_{m,n}^{\ast}(x_{m})y^{\ast}(y_{n})x_{\rho(1)}^{\ast\ast}(x_{\rho(1),n}^{\ast})x_{\rho(2),n}^{\ast}(x_{\rho(2)}).
\end{align*}
This implies that, for positive $x_{\rho(1)}^{\ast\ast}\in E_{\rho(1)}^{\ast\ast}$ and $x_{\rho(2)}^{\ast\ast}\in E_{\rho(2)}^{\ast\ast}$, 
\begin{align*}
\nonumber\big(\overline{x_{\rho(2)}^{\ast\ast}}^{\rho}\circ&\overline{x_{\rho(1)}^{\ast\ast}}^{\rho}\big)(y^{\ast}\circ B)(x_{1},\ldots,_{\rho(1)}x,_{\rho(2)}x,_{\rho(3)}x,\bullet,\ldots,x_{m})(x_{\rho(3)})\\
\nonumber&=\big(\overline{x_{\rho(2)}^{\ast\ast}}^{\rho}\circ\overline{x_{\rho(1)}^{\ast\ast}}^{\rho}\big)(y^{\ast}\circ B)(x_{1},\ldots,_{\rho(1)}x,_{\rho(2)}x,\ldots,x_{m})\\
\nonumber&=x_{\rho(2)}^{\ast\ast}\big(\overline{x_{\rho(1)}^{\ast\ast}}^{\rho}(y^{\ast}\circ B)(x_{1},\ldots,_{\rho(1)}x,_{\rho(2)}x,\bullet,\ldots,x_{m})\big)\\
&\geq x_{\rho(2)}^{\ast\ast}\big(x_{1,n}^{\ast}(x_{1})\cdots _{\rho(1)}t_{\rho(2)}t\cdots x_{m,n}^{\ast}(x_{m})y^{\ast}(y_{n})x_{\rho(1)}^{\ast\ast}(x_{\rho(1),n}^{\ast})x^{\ast}_{\rho(2),n}\big)\\
\nonumber&=x_{1,n}^{\ast}(x_{1})\cdots _{\rho(1)}t_{\rho(2)}t\cdots x_{m,n}^{\ast}(x_{m})y^{\ast}(y_{n})x_{\rho(1)}^{\ast\ast}(x_{\rho(1),n}^{\ast})x_{\rho(2)}^{\ast\ast}(x^{\ast}_{\rho(2),n})\\
\nonumber&=x_{1,n}^{\ast}(x_{1})\cdots _{\rho(1)}t_{\rho(2)}t_{\rho(3)}t\cdots x_{m,n}^{\ast}(x_{m})y^{\ast}(y_{n})x_{\rho(1)}^{\ast\ast}(x_{\rho(1),n}^{\ast})x_{\rho(2)}^{\ast\ast}(x^{\ast}_{\rho(2),n})t_{\rho(3)}\\
\nonumber&=x_{1,n}^{\ast}(x_{1})\cdots _{\rho(1)}t_{\rho(2)}t_{\rho(3)}t\cdots x_{m,n}^{\ast}(x_{m})y^{\ast}(y_{n})x_{\rho(1)}^{\ast\ast}(x_{\rho(1),n}^{\ast})x_{\rho(2)}^{\ast\ast}(x^{\ast}_{\rho(2),n})x_{\rho(3),n}^{\ast}(x_{\rho(3)}).
\end{align*}
In the same fashion, for positive $x_{\rho(i)}^{\ast\ast}\in E_{\rho(i)}^{\ast\ast}, i=1,2,3$,
\begin{align*}
\big(\overline{x_{\rho(3)}^{\ast\ast}}&^{\rho}\circ\overline{x_{\rho(2)}^{\ast\ast}}^{\rho}\circ\overline{x_{\rho(1)}^{\ast\ast}}^{\rho}\big)(y^{\ast}\circ B)(x_{1},\ldots,_{\rho(1)}x,_{\rho(2)}x,_{\rho(3)}x,\ldots,x_{m})\\
&=x_{\rho(3)}^{\ast\ast}\big(\big(\overline{x_{\rho(2)}^{\ast\ast}}^{\rho}\circ\overline{x_{\rho(1)}^{\ast\ast}}^{\rho}(y^{\ast}\circ B)\big)(x_{1},\ldots,_{\rho(1)}x,_{\rho(2)}x,_{\rho(3)}x,\bullet,\ldots,x_{m})\big)\\
&\geq x_{\rho(3)}^{\ast\ast}\big(x_{1,n}^{\ast}(x_{1})\cdots _{\rho(1)}t_{\rho(2)}t_{\rho(3)}t\cdots x_{m,n}^{\ast}(x_{m})y^{\ast}(y_{n})x_{\rho(1)}^{\ast\ast}(x_{\rho(1),n}^{\ast})x_{\rho(2)}^{\ast\ast}(x^{\ast}_{\rho(2),n})x_{\rho(3),n}^{\ast}\big)\\
&=x_{1,n}^{\ast}(x_{1})\cdots _{\rho(1)}t_{\rho(2)}t_{\rho(3)}t\cdots x_{m,n}^{\ast}(x_{m})y^{\ast}(y_{n})x_{\rho(1)}^{\ast\ast}(x_{\rho(1),n}^{\ast})x_{\rho(2)}^{\ast\ast}(x^{\ast}_{\rho(2),n}) 
x_{\rho(3)}^{\ast\ast}(x^*_{\rho(3),n}).
\end{align*}
Repeating the process $m-3$ times we obtain that, for all positive $x_{j}^{\ast\ast}\in E_{j}^{\ast\ast}, j=1,\ldots,m$ and every $n\in\mathbb{N},$
\begin{align*}
AB_{m}^{\rho}(B)&(x_{1}^{\ast\ast},\ldots,x_{m}^{\ast\ast})(y^{\ast})=\big(\overline{x_{\rho(m)}^{\ast\ast}}^{\rho}\circ\cdots\circ \overline{x_{\rho(1)}^{\ast\ast}}^{\rho}\big)(y^{\ast}\circ B)\\
&\geq x_{\rho(1)}^{\ast\ast}(x_{\rho(1),n}^{\ast})\cdots x_{\rho(m)}^{\ast\ast}(x^{\ast}_{\rho(m),n})y^{\ast}(y_{n})= x_{1}^{\ast\ast}(x_{1,n}^{\ast})\cdots x_{m}^{\ast\ast}(x^{\ast}_{m,n})J_{F}(y_{n})(y^{\ast}).
\end{align*}
 In particular, for every  $k\in\mathbb{N}$,
 \begin{align*} \|AB_{m}^{\rho}(B)(x_{1,k}^{\ast\ast},\ldots,x_{m,k}^{\ast\ast})\| &\geq\| x_{1,k}^{\ast\ast}(x_{1,k}^{\ast})\cdots x_{m,k}^{\ast\ast}(x_{m,k}^{\ast})J_{E}(y_{k}\|) \geq \frac{1}{2^{m}}\| J_{E}(y_{k})\| =   \frac{1}{2^{m}} > 0,
 \end{align*}
 proving that $AB_{m}^{\rho}(B)$ fails to be almost Dunford-Pettis.
\end{proof}

Now we are in the position to give examples of almost Dunford-Pettis multilinear operators whose Aron-Berner extensions are not almost Dunford-Pettis.

\begin{example}\rm \label{exem5}
 As mentioned before, the Banach lattice  $E =\big(\oplus_{n}\ell_{\infty}^{n}\big)_{1}$ has the positive Schur property and its bidual fails this property. Let $m \geq 2$ and $E_2, \ldots, E_m$ be Banach lattices whose biduals fail the positive Schur property, for example, $E_j = E$ or $E_j = C(K_j)$ where $K_j$ is compact Hausdorff. Finally, let $F$ be any dual Banach lattice containing a copy of $\ell_\infty$. Combining Theorem \ref{propo3} and Lemma \ref{le2}, there is a positive almost Dunford-Pettis $m$-linear operator from $E \times E_2 \times \cdots \times E_m$ to $F$ whose Aron-Berner extensions are not almost Dunford-Pettis. 
\end{example}

Next we search for conditions under which almost Dunford-Pettis multilinear operators have almost Dunford-Pettis Aron-Berner extensions.

\begin{definition} \rm An $m$-linear operator $A\colon E_{1}^{\ast}\times\cdots\times E_{m}^{\ast}\longrightarrow G$ is {\it weakly positively limited} if, for every $j\in\{1,\ldots,m\}$, the sequence $(A(x_{1,n}^{\ast},\ldots,x_{j,n}^{\ast},\ldots,x_{m,n}^{\ast}))_{n=1}^\infty$ is norm null in $G$ whenever $(x_{j,n}^{\ast})_{n=1}^{\infty}$ is positive weak$^*$-null in $E_{j}^{\ast}$ and $(x_{i,n}^{\ast})_{n=1}^{\infty}$ is positive bounded in $E_{i}^{\ast}$ for $i \neq j$. 
\end{definition}

\begin{examples}\label{exem3}\rm (a) It is clear that weakly positively limited multilinear operators are almost Dunford-Pettis. Let $A$ be the bilinear operator given by the duality $\ell_1^* = \ell_\infty$, that is, $A\colon c_{0}^{\ast}\times \ell_{1}^{\ast}\longrightarrow\mathbb{R}$, $A(x,y)=\sum\limits_{n=1}^{\infty}x_{n}y_{n}$. On the one hand, $A$ is almost Dunford-Pettis by Lemma \ref{le2} because $c_{0}^{\ast}=\ell_{1}$ has  the Schur property. On the other hand, putting $y_n = e_1 + \cdots + e_n$ we have $(y_{n})_{n=1}^{\infty}$ positive bounded in $\ell_{1}^{\ast}=\ell_{\infty}$, $(e_{n})_{n=1}^{\infty}$ positive weak$^*$ null in $c_{0}^{\ast}$, but $A(e_{n},y_{n})=1$ for every $n$. So, $A$ is not weakly positively limited.\\
(b) Aron-Berner extensions of positive almost Dunford-Pettis operators are not always weakly positively limited. The positive bilinear operator $A\colon \ell_{1}\times c_{0}\longrightarrow \mathbb{R}$ given by the duality $c_0^* = \ell_1$ is clearly almost Dunford-Pettis and, working with the canonical unit vectors, it is not difficult to see that the (unique) Aron-Berner extension of $A$ is not weakly positively limited. 
\end{examples}

Consider the permutation $\theta \in S_m$ given by $\theta(m) = 1$, $\theta(m-1) = 2, \ldots, \theta(1) = m$. For an $m$-linear operator $A \colon E_1 \times \cdots \times E_m \longrightarrow G$, the Aron-Berner extension $AB_m^{\theta}(A)$ of $A$ associated to $\theta$ is usually referred to as $A^{*[m+1]}$. The reason is that in the bilinear case $m=2$ this extension is usually denoted by $A^{***}$, that is, $AB(A)_2^{\theta} = A^{***}$. The identity permutation in $S_m$ is denoted by id.

\begin{theorem} \label{teo1}
Let $E_{1},E_{2}$ be Banach lattices whose duals have order continuous norms. Then the Aron-Berner extensions of every positive almost Dunford-Pettis bilinear form on $E_{1} \times E_{2}$ are weakly positively limited, hence almost Dunford-Pettis.
\end{theorem}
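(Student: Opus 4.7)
The plan is to associate to $A$ the positive bounded operators $T_A\colon E_2\to E_1^*$ and $S_A\colon E_1\to E_2^*$ given by $T_A(y)(x)=S_A(x)(y)=A(x,y)$, and to compute the Aron--Berner extensions as
\[
AB_2^{\mathrm{id}}(A)(x^{**},y^{**})=y^{**}\bigl(T_A^*(x^{**})\bigr),\qquad AB_2^{\theta}(A)(x^{**},y^{**})=x^{**}\bigl(S_A^*(y^{**})\bigr).
\]
The hypotheses on $E_1,E_2$ are symmetric in the two indices, so it suffices to prove that $AB_2^{\mathrm{id}}(A)$ is weakly positively limited.

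The core claim I would prove is that $T_A$ is a compact operator. Granted this, Schauder compactness of $T_A^*$ handles both cases in the definition of weakly positively limited. For $(x_n^{**})\ge 0$ weak*-null and $(y_n^{**})\ge 0$ bounded, the operator $T_A^*$ is weak*-to-weak continuous and has relatively norm compact range on bounded sets; weak convergence in the norm-compact set $\overline{T_A^*(M\cdot B_{E_1^{**}})}$ coincides sequentially with norm convergence, so $\|T_A^*(x_n^{**})\|_{E_2^*}\to 0$, and pairing against bounded $(y_n^{**})$ gives the first case. For $(x_n^{**})\ge 0$ bounded and $(y_n^{**})\ge 0$ weak*-null, compactness of $T_A^*$ produces a norm-convergent subsequence $T_A^*(x_{n_k}^{**})\to z\in E_2^*$, and then
\[
y_{n_k}^{**}\bigl(T_A^*(x_{n_k}^{**})\bigr)=y_{n_k}^{**}(z)+y_{n_k}^{**}\bigl(T_A^*(x_{n_k}^{**})-z\bigr)\longrightarrow 0;
\]
a subsequence-of-subsequence argument then covers the whole sequence.

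To prove compactness of $T_A$, the two order-continuity hypotheses would be used in complementary ways. Positivity of $T_A$ combined with order continuity of $E_1^*$ (under which order intervals in $E_1^*$ are weakly compact) makes $T_A$ order weakly compact. Order continuity of $E_2^*$ implies that bounded disjoint sequences in $E_2$ are weakly null. Taking $(y_n)\subset B_{E_2^+}$, a Kadec--Pe\l czy\'nski/Rosenthal-type dichotomy yields either a weakly convergent subsequence, which the order weak compactness of $T_A$ then converts (after a further extraction) to a norm-Cauchy subsequence of $(T_A(y_n))$, or a pairwise disjoint (hence weakly null) subsequence. In the disjoint case, I want $\|T_A(y_n)\|\to 0$; assuming not, pick $x_n\in B_{E_1^+}$ with $A(x_n,y_n)\ge\varepsilon$, apply the same dichotomy to $(x_n)$ in $E_1$ to extract a disjoint weakly null positive subsequence approximately preserving $A(x_n,y_n)$, and derive a contradiction with the almost Dunford--Pettis property of $A$. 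The hardest step will be this last disjointification: the almost Dunford--Pettis hypothesis controls $A(x_n,y_n)$ only when both sequences are disjoint and weakly null, so replacing a merely bounded positive $(x_n)$ with $A(x_n,y_n)\ge\varepsilon$ by a disjoint weakly null positive sequence (without losing too much of the pairing value) is the crux; order continuity of $E_1^*$ is precisely what rules out the obstruction (an $\ell_1$-spreading positive subsequence in $E_1^+$) to this disjointification.
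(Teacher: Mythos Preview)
Your core claim—that $T_A$ is compact—is false in general, and the proof collapses with it. Take $E_1=E_2=C[0,1]$; their duals are AL-spaces and hence have order continuous norms. The positive bilinear form $A(f,g)=\int_0^1 f(t)g(t)\,dt$ is almost Dunford--Pettis: if $(f_n),(g_n)$ are positive disjoint bounded (hence weakly null) sequences in $C[0,1]$, the open sets $\{g_n>0\}$ are pairwise disjoint, so their Lebesgue measures tend to $0$ and $\int f_n g_n\le\|f_n\|_\infty\|g_n\|_\infty\,\lambda(\{g_n>0\})\to 0$. But $T_A\colon C[0,1]\to C[0,1]^*$, $g\mapsto g\,d\lambda$, is, up to an isometric embedding, the inclusion $C[0,1]\hookrightarrow L^1[0,1]$, which is \emph{not} compact: $g_n(t)=\sin(n\pi t)$ is bounded in $C[0,1]$, weakly null in $L^1$, yet $\|g_n\|_{L^1}=2/\pi$ for every $n$.

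The specific step that fails in your compactness sketch is the assertion that order weak compactness of $T_A$ upgrades a weakly convergent subsequence $(y_{n_k})$ to a norm-Cauchy subsequence of $(T_A(y_{n_k}))$. Order weak compactness only says that order intervals are mapped into relatively weakly compact sets; it gives no passage from weak to norm convergence. In the example above $T_A$ is order weakly compact (its codomain has order continuous norm) and still sends the weakly null sequence $(\sin n\pi t)_n$ to a non-norm-null sequence. Note also that the Kadec--Pe\l czy\'nski-type dichotomy you invoke on $E_2$ typically requires order continuity of the norm of $E_2$ itself, which is not assumed here (and fails for $C[0,1]$).

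The paper bypasses compactness entirely. It shows that the linear operator $T\colon E_1\to E_2^*$ (your $S_A$) is almost Dunford--Pettis---this is where the almost Dunford--Pettis hypothesis on $A$ and the order continuity of $E_2^*$ are used, via \cite[Corollary~2.7]{dofre}---and then invokes \cite[Theorem~3.9]{lg2} to conclude that $T^{**}$ sends positive weak$^*$-null sequences in $E_1^{**}$ to norm-null sequences. Combined with $A^{***}(x^{**},y^{**})=T^{**}(x^{**})(y^{**})$ this handles one half of the weakly positively limited condition; the symmetric operator $S\colon E_2\to E_1^*$ handles the other half and the remaining Aron--Berner extension. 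If you want to salvage your outline, the right target property for $T_A$ is ``almost Dunford--Pettis'', not ``compact''.
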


\begin{proof} Consider the positive linear operator $T\colon E_{1}\longrightarrow E_{2}^{\ast}$ given by $T(x)(y)=A(x,y)$. Let $(x_{n})_{n=1}^{\infty}$ be a positive disjoint weakly null sequence in $E_{1}$. 
It is clear that $T(x_{n})\xrightarrow{\,\, \omega\,\, }0$. Every positive disjoint bounded sequence $(y_{n})_{n=1}^{\infty}$ in $E_{2}$ is weakly null by \cite[Theorem 2.4.14]{nieberg} because the norm of $E_{2}^{\ast}$ is order continuous. Since $A$ is almost  Dunford-Pettis, $T(x_{n})(y_{n})=A(x_{n},y_{n})\longrightarrow 0,$ hence $\|T(x_{n})\|\longrightarrow 0$ by \cite[Corollary 2.7]{dofre}. This proves that  $T$ is almost Dunford-Pettis. For all $x^{\ast\ast}\in E_{1}^{\ast\ast}, y^{\ast\ast}\in E_{2}^{\ast\ast}$ and $x\in E_{1}$,
$$T^{\ast}(y^{\ast\ast})(x)=y^{\ast\ast}(T(x))=y^{\ast\ast}(A(x,\bullet))=\overline{y^{\ast\ast}}^{\theta}(A)(x),$$
showing that $T^{\ast}(y^{\ast\ast})=\overline{y^{\ast\ast}}^{\theta}(A)$, therefore $$A^{\ast\ast\ast}(x^{\ast\ast},y^{\ast\ast})=x^{\ast\ast}\big(\overline{y^{\ast\ast}}^{\theta}(A)\big)=x^{\ast\ast}(T^{\ast}(y^{\ast\ast})).$$

   Let $(x_{n}^{\ast\ast})_{n=1}^{\infty}$ be a positive weak$^*$ null sequence in  $E_{1}^{\ast\ast}$ 
   and $(y_{n}^{\ast\ast})_{n=1}^{\infty}$ be a positive bounded sequence in  $E_{2}^{\ast\ast}$, say $\|y_{n}^{\ast\ast}\|\leq M_{1}$ for every $n$. Since $T$ is almost Dunford-Pettis, $\|T^{\ast\ast}(x_{n}^{\ast\ast})\|\longrightarrow 0$  by \cite[Theorem 3.9]{lg2}. So,
$$0\leq A^{\ast\ast\ast}(x_{n}^{\ast\ast},y_{n}^{\ast\ast})=x_{n}^{\ast\ast}(T^{\ast}(y_{n}^{\ast\ast}))=T^{\ast\ast}(x_{n}^{\ast\ast})(y_{n}^{\ast\ast})|\leq M_{1}\|T^{\ast\ast}(x_{n}^{\ast\ast})\|\longrightarrow 0,$$
from which it follows that $A^{\ast\ast\ast}(x_{n}^{\ast\ast},y_{n}^{\ast\ast})\longrightarrow 0$.

Similarly, $A^{\ast\ast\ast}(z_{n}^{\ast\ast},w_{n}^{\ast\ast})\longrightarrow 0$ for every positive weak$^*$ null sequence $(w_{n}^{\ast\ast})_{n=1}^{\infty}$ in  $E_{2}^{\ast\ast}$ and every positive bounded sequence $(z_{n}^{\ast\ast})_{n=1}^{\infty}$ in  $E_{1}^{\ast\ast}$. This proves that $A^{\ast\ast\ast}$ is weakly positively limited. Working with the positive operator
$S\colon E_{2}\longrightarrow E_{1}^{\ast}$ given by $ S(y)(x)=A(x,y)$, analogously one proves that the other Aron-Berner extension of $A$, namely $AB_{2}^{\rm id}(A)$, is weakly positively limited.
\end{proof}

Next we handle the case of vector-valued bilinear operators. Recall that a Banach lattice $E$ has the {\it dual positive Schur property} if every positive weak$^*$-null sequence in $E^*$ is norm null (see, e.g., \cite{wnuk2013}).

\begin{theorem} Suppose that $E_{1}^{\ast}$ and $ E_{2}^{\ast}$ have order continuous norms and that $A\colon E_{1} \times E_{2} \longrightarrow F$ is a positive almost Dunford-Pettis bilinear operator. If either 
every positive disjoint bounded sequence in $F^{\ast}$ is order bounded, or $F^{\ast}$ has the dual positive Schur property, then the Aron Berner extensions of $A$ are weakly positively limited, hence almost Dunford-Pettis.
\end{theorem}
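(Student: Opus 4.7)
The plan is to reduce the statement to the scalar-valued Theorem~\ref{teo1} by composing $A$ with positive functionals on $F$, and then to promote weak$^{\ast}$-nullity in $F^{\ast\ast}$ to norm-nullity using the two alternative hypotheses on $F^{\ast}$.

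First I would fix $\rho\in S_2$ and a positive $y^{\ast}\in F^{\ast}$ and observe that the scalar-valued bilinear form $y^{\ast}\circ A\colon E_1\times E_2\to\mathbb{R}$ is positive and almost Dunford-Pettis, since $\|A(x_n,y_n)\|\to 0$ on positive disjoint weakly null sequences forces $|y^{\ast}(A(x_n,y_n))|\to 0$. Theorem~\ref{teo1} then guarantees that $AB_2^{\rho}(y^{\ast}\circ A)$ is weakly positively limited. An immediate unrolling of the definition of $AB_2^{\rho}$ gives the key identity
$$AB_2^{\rho}(A)(x^{\ast\ast},y^{\ast\ast})(y^{\ast})=AB_2^{\rho}(y^{\ast}\circ A)(x^{\ast\ast},y^{\ast\ast})$$
for every $x^{\ast\ast}\in E_1^{\ast\ast}$, $y^{\ast\ast}\in E_2^{\ast\ast}$ and $y^{\ast}\in F^{\ast}$.

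Next, taking a positive weak$^{\ast}$-null sequence $(x_n^{\ast\ast})$ in $E_1^{\ast\ast}$ and a positive bounded sequence $(y_n^{\ast\ast})$ in $E_2^{\ast\ast}$, the identity together with Theorem~\ref{teo1} applied to every positive $y^{\ast}\in F^{\ast}$ yields $AB_2^{\rho}(A)(x_n^{\ast\ast},y_n^{\ast\ast})(y^{\ast})\to 0$, and the splitting $y^{\ast}=(y^{\ast})^{+}-(y^{\ast})^{-}$ extends this to all $y^{\ast}\in F^{\ast}$. Hence the sequence $z_n^{\ast\ast}:=AB_2^{\rho}(A)(x_n^{\ast\ast},y_n^{\ast\ast})$ is positive in $F^{\ast\ast}$ (because $A$, and therefore $AB_2^{\rho}(A)$, is positive) and weak$^{\ast}$-null.

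The remaining task is to upgrade weak$^{\ast}$-nullity of $(z_n^{\ast\ast})$ to norm-nullity. Under the dual positive Schur property of $F^{\ast}$ this is immediate by definition, since $F^{\ast\ast}=(F^{\ast})^{\ast}$. Under the alternative hypothesis that every positive disjoint bounded sequence in $F^{\ast}$ is order bounded, I would argue by contradiction: using $z_n^{\ast\ast}\geq 0$ so that $\|z_n^{\ast\ast}\|=\sup\{z_n^{\ast\ast}(y^{\ast}):y^{\ast}\in B_{F^{\ast}}^{+}\}$, a failure of $\|z_n^{\ast\ast}\|\to 0$ produces positive $y_n^{\ast}\in B_{F^{\ast}}$ with $z_n^{\ast\ast}(y_n^{\ast})\geq\delta>0$; a standard disjointification then replaces $(y_n^{\ast})$ by a positive disjoint bounded subsequence which, by the hypothesis, is dominated by some $u^{\ast}\in F^{\ast}$, and the domination $0\leq y_n^{\ast}\leq u^{\ast}$ combined with $z_n^{\ast\ast}\geq 0$, $z_n^{\ast\ast}(u^{\ast})\to 0$ and the order-continuity machinery on $E_1^{\ast},E_2^{\ast}$ is to force $z_n^{\ast\ast}(y_n^{\ast})\to 0$, a contradiction. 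This step is the main obstacle: obtaining the domination-based estimate genuinely uses the specific structure of $z_n^{\ast\ast}$ as an Aron-Berner extension of an almost Dunford-Pettis operator, not merely its weak$^{\ast}$-nullity.

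The symmetric requirement, with $(y_n^{\ast\ast})$ positive weak$^{\ast}$-null and $(x_n^{\ast\ast})$ positive bounded, is handled by interchanging the roles of the two variables (equivalently, by working with the companion operator $S(y)(x)=A(x,y)$ from $E_2$ to $\mathcal{L}(E_1;F)$ and with the other Aron-Berner extension). The ``hence almost Dunford-Pettis'' clause is the general observation recorded in Examples~\ref{exem3}(a).
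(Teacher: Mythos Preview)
Your overall strategy matches the paper's: compose with positive functionals, invoke Theorem~\ref{teo1} to get that $z_n^{\ast\ast}:=AB_2^{\rho}(A)(x_n^{\ast\ast},y_n^{\ast\ast})$ is a positive weak$^{\ast}$-null sequence in $F^{\ast\ast}$, and then upgrade to norm-nullity using the hypothesis on $F^{\ast}$. The dual positive Schur case is handled correctly.

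The gap is in your treatment of the ``order bounded'' alternative. Your proposed contradiction argument does not work as written: there is no ``standard disjointification'' that replaces an arbitrary positive bounded sequence $(y_n^{\ast})$ in $F^{\ast}$ by a positive disjoint one while preserving the values $z_n^{\ast\ast}(y_n^{\ast})$ (consider a constant sequence). More importantly, your diagnosis that this step ``genuinely uses the specific structure of $z_n^{\ast\ast}$ as an Aron-Berner extension of an almost Dunford-Pettis operator'' is wrong. Nothing beyond positivity and weak$^{\ast}$-nullity of $(z_n^{\ast\ast})$ is needed.

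The paper's argument here is direct and much simpler. One applies \cite[Corollary~2.7]{dofre}: for a positive sequence $(z_n^{\ast\ast})$ in $(F^{\ast})^{\ast}$ that is weak$^{\ast}$-null, norm-nullity follows once one checks $z_n^{\ast\ast}(\varphi_n)\to 0$ for every positive disjoint bounded sequence $(\varphi_n)$ in $F^{\ast}$. Under the hypothesis there is $\varphi\in F^{\ast}$ with $0\le\varphi_n\le\varphi$, and since $z_n^{\ast\ast}\ge 0$ one has
\[
0\le z_n^{\ast\ast}(\varphi_n)\le z_n^{\ast\ast}(\varphi)\longrightarrow 0.
\]
That is the whole step; no contradiction, no disjointification, and no appeal to the order continuity of $E_1^{\ast},E_2^{\ast}$ beyond what was already used to obtain weak$^{\ast}$-nullity via Theorem~\ref{teo1}.
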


\begin{proof} Let $\rho\in S_{2}$ be given. For any positive functional $\varphi\in F^{\ast}$, $\varphi\circ A $ is a positive almost Dunford-Pettis bilinear form on $E_1 \times E_2$, so
$$AB_{2}^{\rho}(\varphi\circ A)=\varphi^{\ast\ast}\circ AB_{2}^{\rho}(A) \mbox{ (see \cite[Remark 2.3]{lg}})$$ 
is weakly positively limited by Theorem \ref{teo1}. 
Therefore, for every positive weak$^*$-null sequence $(x_{n}^{\ast\ast})_{n=1}^{\infty}$ in $E_{1}^{\ast\ast}$, 
every positive bounded sequence $(y_{n}^{\ast\ast})_{n=1}^{\infty}$ $E_{2}^{\ast\ast}$ and any $\psi\in F^{\ast}$, it holds
\begin{align*}
|AB_{2}^{\rho}(A)(x_{n}^{\ast\ast},y_{n}^{\ast\ast})(\psi)|&\leq AB_{2}^{\rho}(A)(x_{n}^{\ast\ast},y_{n}^{\ast\ast})(|\psi|)
=(|\psi|^{\ast\ast}\circ AB_{2}^{\rho}(A))(x_{n}^{\ast\ast},y_{n}^{\ast\ast})\longrightarrow 0.
\end{align*}
 It follows that $AB_{2}^{\rho}(A)(x_{n}^{\ast\ast},y_{n}^{\ast\ast})\xrightarrow{\,\omega^{\ast}} 0$ em $F^{\ast\ast}$. Analogously,  $AB_{2}^{\rho}(A)(z_{n}^{\ast\ast},w_{n}^{\ast\ast})\xrightarrow{\,\omega^{\ast}} 0$ whenever $(w_{n}^{\ast\ast})_{n=1}^{\infty}$ is positive weak$^*$-null in $E_{2}^{\ast\ast}$ 
 and $(z_{n}^{\ast\ast})_{n=1}^{\infty}$ is positive bounded in  $E_{1}^{\ast\ast}$.

 If $F^{\ast}$ has the dual positive Schur property, then 
 $\|AB_{2}^{\rho}(A)(x_{n}^{\ast\ast},y_{n}^{\ast\ast})\|\longrightarrow 0$, proving that $AB_{2}^{\rho}(A)$ is weakly positively limited.

 Assume now that positive disjoint bounded sequences in $F^{\ast}$ are order bounded. Given a  positive disjoint bounded sequence  $(\varphi_{n})_{n=1}^{\infty}$ in $F^{\ast}$, let $\varphi\in F^{\ast}$ be such that $0\leq \varphi_{n}\leq \varphi$ for every $n\in\mathbb{N}$. Since each functional $AB_{2}^{\rho}(A)(x_{n}^{\ast\ast},y_{n}^{\ast\ast})\colon F^{\ast}\longrightarrow\mathbb{R}$ is positive,
 $$0\leq AB_{2}^{\rho}(A)(x_{n}^{\ast\ast},y_{n}^{\ast\ast})(\varphi_{n})\leq AB_{2}^{\rho}(A)(x_{n}^{\ast\ast},y_{n}^{\ast\ast})(\varphi)\longrightarrow 0,$$
 which allows us to conclude that $ AB_{2}^{\rho}(A)(x_{n}^{\ast\ast},y_{n}^{\ast\ast})(\varphi_{n})\longrightarrow 0$. By \cite[Corollary 2.7]{dofre} it follows that $\|AB_{2}^{\rho}(A)(x_{n}^{\ast\ast},y_{n}^{\ast\ast})\|\longrightarrow 0$. The remaining convergence follows analogously, so 
 $AB_{2}^{\rho}(A)$ is weakly positively limited.
\end{proof}

Note that for the bilinear form $A$ of  Example \ref{exem3}(b), it is not true that  
$A^{\ast\ast\ast}(x_{n}^{\ast\ast},y_{n}^{\ast\ast})\longrightarrow 0$ whenever  $(x_{n}^{\ast\ast})_{n=1}^{\infty}$ is bounded in $\ell_{1}^{\ast\ast}$ and  $(y_{n}^{\ast\ast})_{n=1}^{\infty}$ is positive weak$^*$-null in $c_{0}^{\ast\ast}$. The next result shows that this happens because one of the duals, namely $\ell_{1}^{\ast}=\ell_{\infty}$, fails the positive Schur property. 

\begin{theorem}\label{teo3} Suppose that $E_{1}^{*},\ldots,E_{m}^{*}$ have the positive Schur property and that 
$A$ is a regular $m$-linear form on $E_{1}\times \cdots \times E_{m}$. Then  $A^{\ast[m+1]}(x_{1,n}^{\ast\ast},\ldots,x_{m,n}^{\ast\ast})\longrightarrow 0$ for every positive weak$^*$-null sequence $(x_{m,n}^{\ast\ast})_{n=1}^{\infty}$ in $E_{m}^{\ast\ast}$ 
and all bounded sequences $(x_{k,n}^{\ast\ast})_{n=1}^{\infty}$ in $E_{k}^{\ast\ast}, k=1,\ldots,m-1$. In particular, $A$ and its Aron-Berner extension $A^{\ast[m+1]}$ are almost Dunford-Pettis.
\end{theorem}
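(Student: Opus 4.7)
The argument proceeds by induction on $m$. The base case $m = 1$ is trivial: for regular $A \in E_1^*$, $A^{**}(x_{1,n}^{**}) = x_{1,n}^{**}(A) \to 0$ since $(x_{1,n}^{**})$ is weak$^*$-null. For the inductive step, I write $A = A^+ - A^-$ and, using the Banach lattice structure of each $E_k^{**}$, decompose every bounded $(x_{k,n}^{**})$ for $k = 1, \ldots, m-1$ into its positive and negative parts; by multilinearity of the Aron--Berner extension in $A$ and in each variable, it is enough to prove the statement with $A \ge 0$ and every $x_{k,n}^{**} \ge 0$.

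Define
\[
D_n := \bigl(\overline{x_{2,n}^{**}}^\theta \circ \cdots \circ \overline{x_{m,n}^{**}}^\theta\bigr)(A) \in E_1^*,
\]
so that $A^{*[m+1]}(x_{1,n}^{**}, \ldots, x_{m,n}^{**}) = x_{1,n}^{**}(D_n)$ by the very definition of $AB_m^\theta$. The boundedness of $(x_{1,n}^{**})$ reduces the task to proving $\|D_n\|_{E_1^*} \to 0$. For each fixed $x_1 \in E_1$, a direct unraveling (in the spirit of the computation in the proof of Proposition \ref{propo1}) shows that $D_n(x_1) = A_{x_1}^{*[m]}(x_{2,n}^{**}, \ldots, x_{m,n}^{**})$, where $A_{x_1}(x_2, \ldots, x_m) := A(x_1, x_2, \ldots, x_m)$ is a regular positive $(m-1)$-linear form on $E_2 \times \cdots \times E_m$ whose underlying Banach lattices have duals with the positive Schur property. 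The inductive hypothesis therefore gives $D_n(x_1) \to 0$ for every $x_1 \in E_1$, making $(D_n)$ a positive, bounded, $\sigma(E_1^*, E_1)$-null sequence in $E_1^*$.

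The decisive step is to upgrade this to $\|D_n\|_{E_1^*} \to 0$, which I would obtain as follows: first show that $(D_n)$ is actually weakly null in $E_1^*$ (that is, $\phi(D_n) \to 0$ for every $\phi \in E_1^{**}$), then invoke the positive Schur property of $E_1^*$ to pass from positive weakly null to norm null. For the weak nullness, given $\phi \in (E_1^{**})^+$, Goldstine's theorem supplies a net $(x_\alpha) \subset E_1^+$ with $\|x_\alpha\| \le \|\phi\|$ and $J_{E_1}(x_\alpha) \xrightarrow{\omega^*} \phi$, whence $\phi(D_n) = \lim_\alpha D_n(x_\alpha) = \lim_\alpha A_{x_\alpha}^{*[m]}(x_{2,n}^{**}, \ldots, x_{m,n}^{**})$ for every $n$. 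Combining this representation with the uniform estimate $|A_{x_\alpha}^{*[m]}(x_{2,n}^{**}, \ldots, x_{m,n}^{**})| \le \|A\|\,\|\phi\|\,\prod_{k \ge 2}\|x_{k,n}^{**}\|$ and the pointwise $n$-convergence furnished by the inductive hypothesis, one exchanges the $\alpha$ and $n$ limits to conclude $\phi(D_n) \to 0$. This exchange of limits, which I would justify by exploiting positivity and the order-structure coming from the positive Schur property of the duals, is the main technical hurdle I anticipate in carrying out the argument.

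Finally, the ``In particular'' assertion is immediate from the main convergence: if $(x_{k,n})$ is positive disjoint weakly null in $E_k$ for each $k$, then $(J_{E_k}(x_{k,n}))$ is positive, disjoint (since $J_{E_k}$ is a Riesz homomorphism) and weakly---hence weak$^*$---null in $E_k^{**}$, so the main conclusion delivers $A(x_{1,n}, \ldots, x_{m,n}) = A^{*[m+1]}(J(x_{1,n}), \ldots, J(x_{m,n})) \to 0$, proving that $A$ is almost Dunford--Pettis; an identical application to positive disjoint weakly null sequences in the biduals $E_k^{**}$ (which are bounded and, being weakly null, weak$^*$-null) yields the almost Dunford--Pettis property of $A^{*[m+1]}$.
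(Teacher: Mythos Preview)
The exchange of limits you flag as the ``main technical hurdle'' is in fact a genuine gap, and it cannot be closed along the lines you suggest. Positivity and the positive Schur property of $E_1^*$ are not enough to pass from weak$^*$-null to weakly null: take $E_1=c_0$, so $E_1^*=\ell_1$ has the (positive) Schur property, yet the unit vectors $(e_n)$ in $\ell_1$ are positive and weak$^*$-null but not weakly null. Your Goldstine argument replaces $\phi\in E_1^{**}$ by a \emph{net} $(x_\alpha)$ in $E_1$, and there is no dominated-convergence or compactness mechanism available to swap the $\alpha$-limit with the $n$-limit. In short, the inductive hypothesis gives only pointwise convergence $D_n(x_1)\to 0$ for each fixed $x_1$, and nothing in your setup upgrades this to the uniform control on $B_{E_1}$ that weak nullness of $(D_n)$ would amount to.

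The paper bypasses this obstacle entirely by never attempting to show $(D_n)$ is weakly null. Instead it uses the Dodds--Fremlin criterion \cite[Corollary 2.7]{dofre}: a sequence of positive functionals that is weak$^*$-null \emph{and} satisfies the diagonal condition $D_n(x_{1,n})\to 0$ for every positive disjoint bounded sequence $(x_{1,n})$ in $E_1$ is automatically norm null. The weak$^*$-nullness is your pointwise step; the diagonal condition is obtained by a different mechanism: one linearizes $A$ via operators $T_k\colon E_k\to \mathcal{L}_r(E_1,\ldots,{}_kE,\ldots,E_{m-1};E_m^*)$, observes that the target space inherits the positive Schur property \cite[Theorem 2.8]{gebu}, and uses that disjoint bounded sequences in $E_k$ are weakly null (since $E_k^*$ has order continuous norm) to force $\|T_k(x_{k,n})\|\to 0$. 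This yields the required diagonal estimates at each layer and Dodds--Fremlin then peels off one variable at a time. Your inductive framework is natural, but the missing ingredient is precisely this diagonal control against disjoint sequences, which does not follow from the pointwise inductive hypothesis alone.
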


\begin{proof} We shall prove the case $m = 4$. The argument will make clear  that the general case is analogous. Recall that for $x_{i}^{\ast\ast}\in E_{i}^{\ast\ast}, i=1,2,3,4$, $A^{\ast[5]}(x_{1}^{\ast\ast},x_{2}^{\ast\ast},x_{3}^{\ast\ast},x_{4}^{\ast\ast})=\big(\overline{x_{1}^{\ast\ast}}^{\theta}\circ \overline{x_{2}^{\ast\ast}}^{\theta}\circ\overline{x_{3}^{\ast\ast}}^{\theta}\circ \overline{x_{4}^{\ast\ast}}^{\theta}\big)(A)$, where  $\overline{x_{i}^{\ast\ast}}^{\theta}\colon \mathcal{L}_{r}(E_{1},\ldots,E_{i})\longrightarrow \mathcal{L}_{r}(E_{1},\ldots,E_{i-1})$ is given by $\overline{x_{i}^{\ast\ast}}^{\theta}(B)(x_{1},\ldots,x_{i-1})=x_{i}^{\ast\ast}(B(x_{1},\ldots,x_{i-1},\bullet)).$ Suppose that $A$ is positive.

{\bf Claim 1.} $\|\overline{x_{4,n}^{\ast\ast}}^{\theta}(A)(x_{1},x_{2},\bullet)\|\longrightarrow 0$ for all $x_{1}\in E_{1}, x_{2}\in E_{2}$ and every positive weak$^*$-null sequence $(x_{4,n}^{\ast\ast})_{n=1}^{\infty}$ in $E_{4}^{\ast\ast}$. 

To see this, note that by \cite[Proposition 2.1]{lg},
\begin{align*}
|\overline{x_{4,n}^{\ast\ast}}^{\theta}(A)(x_{1},x_{2},\bullet)|(x_{3})&\leq \overline{x_{4,n}^{\ast\ast}}^{\theta}(A)(|x_{1}|,|x_{2}|,\bullet)|(x_{3})=x_{4,n}^{\ast\ast}(A(|x_{1}|,|x_{2}|,x_{3},\bullet))\longrightarrow 0,
\end{align*}
hence $|\overline{x_{4,n}^{\ast\ast}}^{\theta}(A)(x_{1},x_{2},\bullet)|\xrightarrow{\,\omega^{\ast}} 0$. The positive linear operator $$T_{3}\colon E_{3}\longrightarrow \mathcal{L}_{r}(E_{1},E_{2};E_{4}^{\ast})~,~T_{3}(x_{3})(x_{1},x_{2})(x_{4})= A(x_{1},x_{2},x_{3},x_{4}),$$
is almost Dunford-Pettis because $\mathcal{L}_{r}(E_{1},E_{2};E_{4}^{\ast})$ has the positive Schur property by \cite[Theorem 2.8]{gebu}. 
 Let $(x_{3,n})_{n=1}^{\infty}$ be a positive disjoint bounded sequence in $E_{3}$. Since $E_{3}^{\ast}$ has the positive Schur property, its norm is order continuous, so $x_{3,n}\xrightarrow{\,\omega\,}0$ by \cite[Theorem 2.4.14]{nieberg}. Therefore, assim $\|T_{3}(x_{3,n})\|\longrightarrow 0$ and
\begin{align*}
|\overline{x_{4,n}^{\ast\ast}}^{\theta}(A)(x_{1},x_{2},\bullet)(x_{3,n})|&\leq \overline{x_{4,n}^{\ast\ast}}^{\theta}(A)(|x_{1}|,|x_{2}|,x_{3,n})|=x_{4,n}^{\ast\ast}(A(|x_{1}|,|x_{2}|,x_{3,n},\bullet))\\
&= x_{4,n}^{\ast\ast}(T_{3}(x_{3,n})(|x_{1}|,|x_{2}|)\leq \|x_{4,n}^{\ast\ast}\|\cdot \|x_{1}\|\cdot \|x_{2}\|\cdot \|T_{3}(x_{3,n})\|\longrightarrow 0.
\end{align*}
Now Claim 1 follows from \cite[Corollary 2.7]{dofre}. 

{\bf Claim 2.} $\|\big(\overline{x_{3,n}^{\ast\ast}}^{\theta}\circ \overline{x_{4,n}^{\ast\ast}}^{\theta}\big)(A)(x_{1},\bullet)\|\longrightarrow 0$ for every $x_{1}\in E_{1}$, every bounded sequence  $(x_{3,n}^{\ast\ast})_{n=1}^{\infty}$ in $E_{3}^{\ast\ast}$ and every positive weak$^*$-null sequence $(x_{4,n}^{\ast\ast})_{n=1}^{\infty}$ in $E_{4}^{\ast\ast}$. 

Indeed, calling on \cite[Proposition 2.1]{lg} and Claim 1,
\begin{align*}
|\big(\overline{x_{3,n}^{\ast\ast}}^{\theta}\circ \overline{x_{4,n}^{\ast\ast}}^{\theta}\big)(A)(x_{1},&\bullet)|(x_{2})\leq\big(\overline{|x_{3,n}^{\ast\ast}|}^{\theta}\circ \overline{x_{4,n}^{\ast\ast}}^{\theta}\big)(A)(|x_{1}|,x_{2})=\overline{|x_{3,n}^{\ast\ast}|}^{\theta}\big( \overline{x_{4,n}^{\ast\ast}}^{\theta}(A)\big)(|x_{1}|,x_{2})\\
&=|x_{3,n}^{\ast\ast}|\big(\overline{x_{4,n}^{\ast\ast}}^{\theta}(A)(|x_{1}|,x_{2},\bullet)\big)\leq \|x_{3,n}^{\ast\ast}\|\cdot\|\overline{x_{4,n}^{\ast\ast}}^{\theta}(A)(|x_{1}|,x_{2},\bullet)\|\longrightarrow 0,
\end{align*}
hence $|\big(\overline{x_{3,n}^{\ast\ast}}^{\theta}\circ \overline{x_{4,n}^{\ast\ast}}^{\theta}\big)(A)(x_{1},\bullet)|\xrightarrow{\,\omega^{\ast}} 0$ in $E_{2}^{\ast}$. Let $T_{2} \colon E_{2} \longrightarrow \mathcal{L}_{r}(E_{1},E_{3};E_{4}^{\ast})$ be the positive linear operator such that 
$A(x_{1},x_{2},x_{3},x_{4})=T_{2}(x_{2})(x_{1},x_{3})(x_{4})$. Thus $T_{2}$ is almost Dunford-Pettis because $\mathcal{L}_{r}(E_{1},E_{3};E_{4}^{\ast})$ has the positive Schur property by \cite[Theorem 2.8]{gebu}. Let $(x_{2,n})_{n=1}^{\infty}$ be a positive disjoint bounded sequence in $E_{2}$. As before, $E_{2}^{\ast}$ has order continuous norm, so $x_{2,n}\xrightarrow{\,\omega\,}0$ by \cite[Theorem 2.4.14]{nieberg}. Therefore, $\|T_{2}(x_{2,n})\|\longrightarrow 0$ and
\begin{align*}
|\overline{x_{4,n}^{\ast\ast}}^{\theta}(A)(|x_{1}|,x_{2,n},\bullet)&(x_{3})|\leq \overline{x_{4,n}^{\ast\ast}}^{\theta}(A)(|x_{1}|,x_{2,n},|x_{3}|)|= x_{4,n}^{\ast\ast}(A(|x_{1}|,x_{2,n},|x_{3}|,\bullet))\\
&= x_{4,n}^{\ast\ast}(T_{2}(x_{2,n})(|x_{1}|,|x_{3}|))\leq \|x_{4,n}^{\ast\ast}\|\cdot\|x_{1}\|\cdot\|x_{3}\|\cdot\|T_{2}(x_{2,n}\|\longrightarrow 0,
\end{align*}
hence $\overline{x_{4,n}^{\ast\ast}}^{\theta}(A)(|x_{1}|,x_{2,n},\bullet)\xrightarrow{\,\omega^{\ast}}0$. For any positive disjoint bounded sequence $(x_{3,n})_{n=1}^{\infty}$ in $E_{3}$,
\begin{align*}
\overline{x_{4,n}^{\ast\ast}}^{\theta}(A)(|x_{1}|,x_{2,n},&\bullet)(x_{3,n})=\overline{x_{4,n}^{\ast\ast}}^{\theta}(A)(|x_{1}|,x_{2,n},x_{3,n})\leq x_{4,n}^{\ast\ast}(A(|x_{1}|,x_{2,n},x_{3,n},\bullet))\\
&=x_{4,n}^{\ast\ast}(T_{2}(x_{2,n})(x_{1},x_{3,n}))\leq \|x_{4,n}^{\ast\ast}\| \cdot \|x_{1}\|\cdot \|x_{3,n}\|\cdot \|T_{2}(x_{2,n})\|\longrightarrow 0.
\end{align*}
By \cite[Corollary 2.7]{dofre} we have $\|\overline{x_{4,n}^{\ast\ast}}^{\theta}(A)(|x_{1}|,x_{2,n},\bullet)\|\longrightarrow 0$. It follows that
\begin{align*}
|\big(\overline{x_{3,n}^{\ast\ast}}^{\theta}\circ \overline{x_{4,n}^{\ast\ast}}^{\theta}\big)(A)&(x_{1},\bullet)(x_{2,n})|=|\big(\overline{x_{3,n}^{\ast\ast}}^{\theta}\circ \overline{x_{4,n}^{\ast\ast}}^{\theta}\big)(A)(x_{1},x_{2,n})|\leq\overline{|x_{3,n}^{\ast\ast}|}^{\theta}\big( \overline{x_{4,n}^{\ast\ast}}^{\theta}(A)\big)(|x_{1}|,x_{2,n})\\
&=|x_{3,n}^{\ast\ast}|\big(\overline{x_{4,n}^{\ast\ast}}^{\theta}(A)(|x_{1}|,x_{2,n},\bullet)\big)\leq\|x_{3,n}^{\ast\ast}\|\cdot\|\overline{x_{4,n}^{\ast\ast}}^{\theta}(A)(|x_{1}|,x_{2,n},\bullet)\|\longrightarrow 0.
\end{align*}
Calling on \cite[Corollary 2.7]{dofre} once again we get Claim 2. 

{\bf Claim 3.} $\|\big(\overline{x_{2,n}^{\ast\ast}}^{\theta}\circ \overline{x_{3,n}^{\ast\ast}}^{\theta}\circ \overline{x_{4,n}^{\ast\ast}}^{\theta}\big)(A)\|\longrightarrow 0$ for all bounded sequences $(x_{j,n}^{\ast\ast})_{n=1}^{\infty}$ in $E_{j}^{\ast\ast}, j=2,3$, and every positive weak$^*$-null sequence $(x_{4,n}^{\ast\ast})_{n=1}^{\infty}$ in $E_{4}^{\ast\ast}$.

By Claim 2,
\begin{align*}
|\big(\overline{x_{2,n}^{\ast\ast}}^{\theta}&\circ\overline{x_{3,n}^{\ast\ast}}^{\theta}\circ \overline{x_{4,n}^{\ast\ast}}^{\theta}\big)(A)(x_{1})|=|\overline{x_{2,n}^{\ast\ast}}^{\theta}\big(\big( \overline{x_{3,n}^{\ast\ast}}^{\theta}\circ \overline{x_{4,n}^{\ast\ast}}^{\theta}\big)(A)\big)(x_{1})|\\
&=|x_{2,n}^{\ast\ast}\big(\big( \overline{x_{3,n}^{\ast\ast}}^{\theta}\circ \overline{x_{4,n}^{\ast\ast}}^{\theta}\big)(A)(x_{1},\bullet)\big)|\leq \|x_{2,n}^{\ast\ast}\|\cdot\|\big( \overline{x_{3,n}^{\ast\ast}}^{\theta}\circ \overline{x_{4,n}^{\ast\ast}}^{\theta}\big)(A)(x_{1},\bullet)\|\longrightarrow 0,
\end{align*}
which gives $\big(\overline{x_{2,n}^{\ast\ast}}^{\theta}\circ\overline{x_{3,n}^{\ast\ast}}^{\theta}\circ \overline{x_{4,n}^{\ast\ast}}^{\theta}\big)(A)\xrightarrow{\omega^{\ast}} 0$. Let us see that, for every positive disjoint bounded sequence $(x_{1,n})_{n=1}^{\infty}$ in $E_{1}$,  $\big(\overline{x_{2,n}^{\ast\ast}}^{\theta}\circ\overline{x_{3,n}^{\ast\ast}}^{\theta}\circ \overline{x_{4,n}^{\ast\ast}}^{\theta}\big)(A)(x_{1,n})\longrightarrow 0$. Let $T_{1}\colon E_{1} \longrightarrow \mathcal{L}_{r}(E_{2},E_{3};E_{4}^{\ast})$ be the positive linear operator such that  $T_{1}(x_{1})(x_{2},x_{3})(x_{4})= A(x_{1},x_{2},x_{3},x_{4})$. Applying 
\cite[Theorem 2.8]{gebu}, $\mathcal{L}_{r}(E_{2},E_{3};E_{4}^{\ast})$ has the positive Schur property, hence $T_{1}$ is almost Duford-Pettis. Since $E_{1}^{\ast}$ has order continuous norm, $x_{1,n}\xrightarrow{\,\omega\,} 0$ by \cite[Theorem 2.4.14]{nieberg}, hence $\|T_{1}(x_{1,n})\|\longrightarrow 0$. By \cite[Proposition 2.1]{lg},
\begin{align*}
|\overline{x_{4,n}^{\ast\ast}}^{\theta}(A)(x_{1,n},x_{2},\bullet)|(x_{3})&\leq \overline{x_{4,n}^{\ast\ast}}^{\theta}(A)(x_{1,n},|x_{2}|,x_{3})=x_{4,n}^{\ast\ast}(A(x_{1,n},|x_{2}|,x_{3},\bullet))\\
&=x_{4,n}^{\ast\ast}(T_{1}(x_{1,n})(|x_{2}|,x_{3})) \leq \|x_{4,n}^{\ast\ast}\|\cdot\|x_{2}\|\cdot\|x_{3}\|\cdot\|T_{1}(x_{1,n})\|\longrightarrow 0,
\end{align*}
from which we get $|\overline{x_{4,n}^{\ast\ast}}^{\theta}(A)(x_{1,n},x_{2},\bullet)|\xrightarrow{\,\omega^{\ast}} 0$. For any positive disjoint bounded sequence $(x_{3,n})_{n=1}^{\infty}$ in $E_{3}$,
\begin{align*}
|\overline{x_{4,n}^{\ast\ast}}^{\theta}(A)(x_{1,n},& x_{2},\bullet)(x_{3,n})|=|\overline{x_{4,n}^{\ast\ast}}^{\theta}(A)(x_{1,n},x_{2},x_{3,n})|=|x_{4,n}^{\ast\ast}(A(x_{1,n},x_{2},x_{3,n},\bullet))|\\
&=|x_{4,n}^{\ast\ast}(T_{1}(x_{1,n})(x_{2},x_{3,n}))|\leq \|x_{4,n}^{\ast\ast}\|\cdot\|x_{2}\|\cdot\|x_{3,n}\|\cdot\|T_{1}(x_{1,n}\|\longrightarrow 0.
\end{align*}
By \cite[Corollary 2.7]{dofre}, $\|\overline{x_{4,n}^{\ast\ast}}^{\theta}(A)(x_{1,n},x_{2},\bullet)\|\longrightarrow 0$, and by \cite[Proposition 2.1]{lg},
\begin{align*}
|\big(\overline{x_{3,n}^{\ast\ast}}^{\theta}\circ \overline{x_{4,n}^{\ast\ast}}^{\theta}\big)&(A)(x_{1,n},\bullet)|(x_{2})\leq\big(\overline{|x_{3,n}^{\ast\ast}|}^{\theta}\circ \overline{x_{4,n}^{\ast\ast}}^{\theta}\big)(A)(x_{1,n},x_{2})=\overline{|x_{3,n}^{\ast\ast}|}^{\theta}\big( \overline{x_{4,n}^{\ast\ast}}^{\theta}(A)\big)(x_{1,n},x_{2})\\
&=|x_{3,n}^{\ast\ast}|\big(\overline{x_{4,n}^{\ast\ast}}^{\theta}(A)(x_{1,n},x_{2},\bullet)\big)\leq \|x_{3,n}^{\ast\ast}\|\cdot\|\overline{x_{4,n}^{\ast\ast}}^{\theta}(A)(x_{1,n},|x_{2}|,\bullet)\|\longrightarrow 0.
\end{align*}
It follows that $|\big(\overline{x_{3,n}^{\ast\ast}}^{\theta}\circ \overline{x_{4,n}^{\ast\ast}}^{\theta}\big)(A)(x_{1,n},\bullet)|\xrightarrow{\,\omega^{\ast}} 0$ in $E_{2}^{\ast}$. For any positive disjoint bounded sequence $(x_{2,n})_{n=1}^{\infty}$ in $E_{2}$,
\begin{align*}
|\overline{x_{4,n}^{\ast\ast}}^{\theta}(A)(x_{1,n},x_{2,n},& \bullet)(x_{3})|=|\overline{x_{4,n}^{\ast\ast}}^{\theta}(A)(x_{1,n},x_{2,n},x_{3})|\leq x_{4,n}^{\ast\ast}(A(x_{1,n},x_{2,n},|x_{3}|,\bullet))\\
&\leq x_{4,n}^{\ast\ast}(T_{1}(x_{1,n})(x_{2,n},|x_{3}|))\leq \|x_{4,n}^{\ast\ast}\|\cdot\|x_{2,n}\|\cdot\|x_{3}\|\cdot \|T_{1}(x_{1,n})\|\longrightarrow 0,
\end{align*}
which gives $\overline{x_{4,n}^{\ast\ast}}^{\theta}(A)(x_{1,n},x_{2,n},\bullet)\xrightarrow{\,\omega^{\ast}}0$. And for any positive disjoint bounded sequence $(x_{3,n})_{n=1}^{\infty}$ in $E_{3}$,
\begin{align*}
0\leq \overline{x_{4,n}^{\ast\ast}}^{\theta}(A)&(x_{1,n},x_{2,n},\bullet)(x_{3,n})=\overline{x_{4,n}^{\ast\ast}}^{\theta}(A)(x_{1,n},x_{2,n},x_{3,n})\leq x_{4,n}^{\ast\ast}(A(x_{1,n},x_{2,n},x_{3,n},\bullet))\\
&=x_{4,n}^{\ast\ast}(T_{1}(x_{1,n})(x_{2,n},x_{3,n}))\leq \|x_{4,n}^{\ast\ast}\| \cdot\|x_{2,n}\| \cdot \|x_{3,n}\| \cdot \|T_{1}(x_{1,n})\|\longrightarrow 0,
\end{align*}
proving that  $\overline{x_{4,n}^{\ast\ast}}^{\theta}(A)(x_{1,n},x_{2,n},\bullet)(x_{3,n})\longrightarrow 0$. We get $\|\overline{x_{4,n}^{\ast\ast}}^{\theta}(A)(x_{1,n},x_{2,n},\bullet)\|\longrightarrow 0$ by \cite[Corollary 2.7]{dofre}, so
\begin{align*}
|\big(\overline{x_{3,n}^{\ast\ast}}^{\theta}\circ \overline{x_{4,n}^{\ast\ast}}^{\theta}\big)(A)(x_{1,n},\bullet)&(x_{2,n})|=|\big(\overline{x_{3,n}^{\ast\ast}}^{\theta}\circ \overline{x_{4,n}^{\ast\ast}}^{\theta}\big)(A)(x_{1,n},x_{2,n})|\\
&\leq\overline{|x_{3,n}^{\ast\ast}|}^{\theta}\big( \overline{x_{4,n}^{\ast\ast}}^{\theta}(A)\big)(x_{1,n},x_{2,n})=|x_{3,n}^{\ast\ast}|\big(\overline{x_{4,n}^{\ast\ast}}^{\theta}(A)(x_{1,n},x_{2,n},\bullet)\big)\\
&\leq \|x_{3,n}^{\ast\ast}\|\cdot\|\overline{x_{4,n}^{\ast\ast}}^{\theta}(A)(x_{1,n},x_{2,n},\bullet)\|\longrightarrow 0.
\end{align*}
\cite[Corollary 2.7]{dofre} implies $\|\big(\overline{x_{3,n}^{\ast\ast}}^{\theta}\circ \overline{x_{4,n}^{\ast\ast}}^{\theta}\big)(A)(x_{1,n},\bullet)\|\longrightarrow 0$, therefore
\begin{align*}
|\big( \overline{x_{2,n}^{\ast\ast}}^{\theta}\circ\overline{x_{3,n}^{\ast\ast}}^{\theta}\circ \overline{x_{4,n}^{\ast\ast}}^{\theta}\big)(A)(x_{1,n})|&=|\overline{x_{2,n}^{\ast\ast}}^{\theta}\big(\big( \overline{x_{3,n}^{\ast\ast}}^{\theta}\circ \overline{x_{4,n}^{\ast\ast}}^{\theta}\big)(A)\big)(x_{1,n})|\\
&=|x_{2,n}^{\ast\ast}\big(\big( \overline{x_{3,n}^{\ast\ast}}^{\theta}\circ \overline{x_{4,n}^{\ast\ast}}^{\theta}\big)(A)(x_{1,n},\bullet)\big)|\\
&\leq\|x_{2,n}^{\ast\ast}\|\cdot\|\big( \overline{x_{3,n}^{\ast\ast}}^{\theta}\circ \overline{x_{4,n}^{\ast\ast}}^{\theta}\big)(A)(x_{1,n},\bullet)\|\longrightarrow 0,
\end{align*}
which gives $\big(\overline{x_{2,n}^{\ast\ast}}^{\theta}\circ\overline{x_{3,n}^{\ast\ast}}^{\theta}\circ \overline{x_{4,n}^{\ast\ast}}^{\theta}\big)(A)(x_{1,n})\longrightarrow 0$. Calling  \cite[Corollary 2.7]{dofre} one last time, we obtain Claim 3. 

  Finally, let $(x_{j,n}^{\ast\ast})_{n=1}^{\infty}$ be bounded sequences in $E_{j}^{\ast\ast}, j=1,2,3$, and $(x_{4,n}^{\ast\ast})_{n=1}^{\infty}$ be a positive weak$^*$-null sequence in $E_{4}^{\ast\ast}$. 
  By Claim 3,
\begin{align*}
|A^{\ast[5]}(x_{1,n}^{\ast\ast},\ldots,x_{4,n}^{\ast\ast})|&=|\big(\overline{x_{1,n}^{\ast\ast}}^{\theta}\circ\cdots\circ \overline{x_{4,n}^{\ast\ast}}^{\theta}\big)(A)|=|x_{1,n}^{\ast\ast}\big(\big(\overline{x_{2,n}^{\ast\ast}}^{\theta}\circ\overline{x_{3,n}^{\ast\ast}}^{\theta}\circ \overline{x_{4,n}^{\ast\ast}}^{\theta}\big)(A)\big)|\\
&\leq \|x_{1,n}^{\ast\ast}\|\cdot\|\big(\overline{x_{2,n}^{\ast\ast}}^{\theta}\circ\overline{x_{3,n}^{\ast\ast}}^{\theta}\circ \overline{x_{4,n}^{\ast\ast}}^{\theta}\big)(A)\|\longrightarrow 0,
\end{align*}
proving that $A^{\ast[5]}(x_{1,n}^{\ast\ast},\ldots,x_{4,n}^{\ast\ast})\longrightarrow 0$.

If $A$ is regular, then $A=A_{1}-A_{2}$ for some positive $m$-linear operators $A_1$ and $A_2$. Apply what we have just proved for $A_1$ and $A_2$ and use that 
$A^{\ast[5]}=A_{1}^{\ast[5]}-A_{2}^{\ast[5]}$. 
\end{proof}

The case of vector-valued multilinear operators reads as follows.

\begin{theorem}\label{teo6} Suppose that $E_{1}^{\ast},\ldots,E_{m}^{\ast}$ have the positive Schur property and that either $F^*$ has the dual positive Schur property or every positive disjoint bounded sequence in $F^*$ is order bounded. If $A \colon E_{1} \times \cdots \times E_{m}\longrightarrow F$ is a regular $m$-linear operator, then 
$\|A^{\ast[m+1]}(x_{1,n}^{\ast\ast},\ldots,x_{m,n}^{\ast\ast})\|\longrightarrow 0$ for every positive weak$^*$-null sequence $(x_{m,n}^{\ast\ast})_{n=1}^{\infty}$ in $E_{m}^{\ast\ast}$ 
and all bounded sequences $(x_{k,n}^{\ast\ast})_{n=1}^{\infty}$ in $E_{k}^{\ast\ast}, k=1,\ldots,m-1$. In particular, $A$ and its Aron-Berner extension $A^{\ast[m+1]}$ are almost Dunford-Pettis.
\end{theorem}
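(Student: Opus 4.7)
The plan is to mimic the approach taken for the vector-valued bilinear case by reducing the vector-valued multilinear assertion to the scalar-valued Theorem \ref{teo3} via composition with functionals in $F^*$. Before doing so, two preliminary reductions simplify the setup. First, writing $A = A_1 - A_2$ with $A_1, A_2$ positive and noting that $A^{*[m+1]} = A_1^{*[m+1]} - A_2^{*[m+1]}$ by linearity of the Aron-Berner construction, I reduce to the case where $A$ is positive. Second, for each $k = 1, \ldots, m-1$ I decompose $x_{k,n}^{**} = (x_{k,n}^{**})^+ - (x_{k,n}^{**})^-$ and expand $A^{*[m+1]}(x_{1,n}^{**}, \ldots, x_{m,n}^{**})$ by $m$-linearity into $2^{m-1}$ signed summands; since $\|(x_{k,n}^{**})^\pm\| \le \|x_{k,n}^{**}\|$, each summand satisfies the same type of hypothesis but with all positive arguments, so the triangle inequality reduces the proof to the case where every $(x_{k,n}^{**})_{n=1}^\infty$ is positive bounded. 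With $A$ positive and all its arguments positive, each $A^{*[m+1]}(x_{1,n}^{**}, \ldots, x_{m,n}^{**})$ is then a positive functional in $F^{**}$, which will be crucial below.

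The second step is to prove weak$^*$-convergence to $0$ in $F^{**}$. Given $\psi \in F^*$, the scalar $m$-linear form $|\psi| \circ A$ is regular (as the composition of a regular operator with a positive functional), and by \cite[Remark 2.3]{lg} the identity $AB_m^\theta(|\psi| \circ A) = |\psi|^{**} \circ AB_m^\theta(A)$ rewrites as
\[
A^{*[m+1]}(x_{1,n}^{**}, \ldots, x_{m,n}^{**})(|\psi|) = (|\psi| \circ A)^{*[m+1]}(x_{1,n}^{**}, \ldots, x_{m,n}^{**}).
\]
Since the $E_j^*$ all have the positive Schur property, Theorem \ref{teo3} applies to $|\psi| \circ A$ and forces the right-hand side to tend to $0$. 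Using positivity of $A^{*[m+1]}(x_{1,n}^{**}, \ldots, x_{m,n}^{**})$ in $F^{**}$, one then has $|A^{*[m+1]}(x_{1,n}^{**}, \ldots, x_{m,n}^{**})(\psi)| \le A^{*[m+1]}(x_{1,n}^{**}, \ldots, x_{m,n}^{**})(|\psi|) \to 0$, so the sequence is weak$^*$-null in $F^{**}$.

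The third step is to upgrade to norm convergence according to the dichotomy on $F^*$. If $F^*$ has the dual positive Schur property, then any positive weak$^*$-null sequence in $(F^*)^* = F^{**}$ is norm null, and the conclusion follows at once. If instead every positive disjoint bounded sequence in $F^*$ is order bounded, for any such $(\varphi_n)$ I pick $\varphi \in F^*$ with $0 \le \varphi_n \le \varphi$; then positivity of $A^{*[m+1]}(x_{1,n}^{**}, \ldots, x_{m,n}^{**})$ combined with the weak$^*$-convergence from the previous step yields
\[
0 \le A^{*[m+1]}(x_{1,n}^{**}, \ldots, x_{m,n}^{**})(\varphi_n) \le A^{*[m+1]}(x_{1,n}^{**}, \ldots, x_{m,n}^{**})(\varphi) \longrightarrow 0,
\]
and \cite[Corollary 2.7]{dofre}, applied exactly as in the vector-valued bilinear theorem preceding this statement, converts this disjoint test together with weak$^*$-convergence into $\|A^{*[m+1]}(x_{1,n}^{**}, \ldots, x_{m,n}^{**})\| \to 0$. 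I expect the main technical obstacle to be the bookkeeping of the positive reductions for general $m$; once these are in place, the scalar case (Theorem \ref{teo3}) carries all the analytic weight and the dichotomy on $F^*$ is absorbed verbatim as in the vector-valued bilinear theorem.
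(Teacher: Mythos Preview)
Your proposal is correct and follows essentially the same route as the paper: reduce to the scalar-valued Theorem~\ref{teo3} via composition with functionals in $F^*$, obtain weak$^*$-convergence of the (positive) sequence in $F^{**}$, and then treat the two alternatives on $F^*$ exactly as you describe. The only cosmetic difference is that you reduce to positive $A$ and positive arguments at the outset (expanding into $2^{m-1}$ summands), whereas the paper keeps $A$ regular and works directly with the modulus $|A^{\ast[m+1]}|$ and with $|x_{k,n}^{\ast\ast}|$ inside the estimates; both organizations lead to the same inequalities and the same invocation of \cite[Corollary~2.7]{dofre}.
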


\begin{proof}
Write $A=A_{1}-A_{2}$ where $A_{1}, A_{2}$ are positive $m$-linear operators. Let $(x_{m,n}^{\ast\ast})_{n=1}^{\infty}$ be a positive weak$^*$-null sequence in $E_{m}^{\ast\ast}$ and $(x_{k,n}^{\ast\ast})_{n=1}^{\infty}$ be bounded sequences in $E_{k}^{\ast\ast}, k=1,\ldots,m-1$. For each functional 
$y^{\ast}\in F^{\ast}$, $y^{\ast}\circ A_{i}\in\mathcal{L}_{r}(E_{1},\ldots,E_{m}), i=1,2$, therefore  $(y^{\ast}\circ A_{i})^{\ast[m+1]}(|x_{1,n}^{\ast\ast}|,\ldots,|x_{m-1,n}^{\ast\ast}|, x_{m,n}^{\ast\ast})\longrightarrow 0$ by Theorem \ref{teo3}. From 
\begin{align*}
|\,|A^{\ast[m+1]}(x_{1,n}^{\ast\ast},\ldots,x_{m,n}^{\ast\ast})|(y^{\ast})|&\leq |A^{\ast[m+1]}|(|x_{1,n}^{\ast\ast}|,\ldots, |x_{m-1,n}^{\ast\ast}|, x_{m,n}^{\ast\ast})(|y^{\ast}|)\\
&=|A_{1}^{\ast[m+1]}-A_{2}^{\ast[m+1]}|(|x_{1,n}^{\ast\ast}|,\ldots,|x_{m-1,n}^{\ast\ast}|,x_{m,n}^{\ast\ast})(|y^{\ast}|)\\
&\leq \big(A_{1}^{\ast[m+1]}+A_{2}^{\ast[m+1]}\big)(|x_{1,n}^{\ast\ast}|,\ldots,|x_{m-1,n}^{\ast\ast}|,x_{m,n}^{\ast\ast})(|y^{\ast}|)\\
&=\sum_{i=1}^{2}(|y^{\ast}|\circ A_{i})^{\ast[m+1]}(|x_{1,n}^{\ast\ast}|,\ldots,|x_{m-1,n}^{\ast\ast}|,x_{m,n}^{\ast\ast})\longrightarrow 0,
\end{align*}
it follows that the positive sequence  $(|A^{\ast[m+1]}(x_{1,n}^{\ast\ast},\ldots,x_{m,n}^{\ast\ast})|)_{n=1}^{\infty}$ is weak$^*$-null in $F^{\ast\ast}$. If $F^*$ has the dual positive Schur property, then 
$\|A^{\ast[m+1]}(x_{1,n}^{\ast\ast},\ldots,x_{m,n}^{\ast\ast})\|\longrightarrow 0$ as desired. Suppose now that positive disjoint bounded sequences in $F^*$ are order bounded. Given a positive disjoint bounded sequence $(\varphi_{n})_{n=1}^{\infty}$ in $F^{\ast}$, let $\varphi\in F^{\ast}$ be such that $0\leq \varphi_{n}\leq \varphi$ for every $n$. From
\begin{align*}
|A^{\ast[m+1]}(x_{1,n}^{\ast\ast}&,\ldots,x_{m,n}^{\ast\ast})(\varphi_{n})|\leq |A^{\ast[m+1]}|(|x_{1,n}^{\ast\ast}|,\ldots,|x_{m-1,n}^{\ast\ast}|,x_{m,n}^{\ast\ast})(\varphi_{n})\\
& \leq |A^{\ast[m+1]}|(|x_{1,n}^{\ast\ast}|,\ldots,|x_{m-1,n}^{\ast\ast}|,x_{m,n}^{\ast\ast})(\varphi)\\
&=|A_{1}^{\ast[m+1]}-A_{2}^{\ast[m+1]}|(|x_{1,n}^{\ast\ast}|,\ldots,|x_{m-1,n}^{\ast\ast}|,x_{m,n}^{\ast\ast})(\varphi)\\
&\leq A_{1}^{\ast[m+1]}(|x_{1,n}^{\ast\ast}|,\ldots,|x_{m-1,n}^{\ast\ast}|,x_{m,n}^{\ast\ast})(\varphi)+A_{2}^{\ast[m+1]}(|x_{1,n}^{\ast\ast}|,\ldots,|x_{m-1,n}^{\ast\ast}|,x_{m,n}^{\ast\ast})(\varphi)\\
&=\big((\varphi\circ A_{1})^{\ast[m+1]}+(\varphi\circ A_{2})^{\ast[m+1]}\big)(|x_{1,n}^{\ast\ast}|,\ldots,|x_{m-1,n}^{\ast\ast}|,x_{m,n}^{\ast\ast})\longrightarrow 0,
\end{align*}
we obtain that $A^{\ast[m+1]}(x_{1,n}^{\ast\ast},\ldots,x_{m,n}^{\ast\ast})(\varphi_{n})\longrightarrow 0$. Therefore,  $\|A^{\ast[m+1]}(x_{1,n}^{\ast\ast},\ldots,x_{m,n}^{\ast\ast})\|\longrightarrow 0$ by \cite[Corollary 2.7]{dofre}.

 In particular, in both cases $A^{\ast[m+1]}$ is almost Dunford-Pettis, and Lemma  \ref{le2}(a) assures that $A$ is almost Dunford-Pettis as well.
\end{proof}

\begin{remark}\rm With the obvious adjustments, the conclusions of Theorems \ref{teo3} and \ref{teo6} hold for all Aron-Berner extensions $AB_m^\rho(A), \rho \in S_m$, of $A$.
\end{remark}

\begin{examples}\rm As  to the assumptions of Theorems \ref{teo3} and \ref{teo6}, we mention some illustrative examples: (i) AL-spaces have the positive Schur property \cite[p.\,16]{wnukbari}, so duals of AM-spaces have this property. In particular, duals of $C(K)$-spaces and of $L_1(\mu)^*$-spaces have the positive Schur property. In particular, the dual of $\ell_\infty$ has the positive Schur property. Moreover, if $E_1^*, \ldots, E_m^*$ have the positive Schur property, then the dual of the positive projective tensor product $E_1 \widehat\otimes_{|\pi|} \otimes \cdots \widehat\otimes_{|\pi|} E_m$ has the positive Schur property as well \cite[Proposition 4.3]{gebu}. 
(ii) $C(K)$-spaces have the dual positive Schur property \cite[Ex.\,4]{wnuk2013} and the quotient of a Banach lattice with this property  by a closed ideal has this property as well \cite[p.\,763]{wnuk2013}. In particular, $\ell_\infty/c_0$ has the dual positive Schur property. (iii) It is clear that positive disjoint bounded sequences in $C(K)$-spaces are order bounded.
\end{examples}

\bigskip

\noindent Faculdade de Matem\'atica~~~~~~~~~~~~~~~~~~~~~~Instituto de Matem\'atica e Estat\'istica\\
Universidade Federal de Uberl\^andia~~~~~~~~ Universidade de S\~ao Paulo\\
38.400-902 -- Uberl\^andia -- Brazil~~~~~~~~~~~~ 05.508-090 -- S\~ao Paulo -- Brazil\\
e-mail: botelho@ufu.br ~~~~~~~~~~~~~~~~~~~~~~~~~e-mail: luisgarcia@ime.usp.br

%
%
%


\begin{thebibliography}{99}\small

%
%
%
%
%
%
%
%
%
%
%
%
%
%
%
%
%
%

%
%
%
%
%

%
%
%
%
%
%
%

%
%
%
%
%
%
%
%
%
%
%
%
%
%
%
%
%
%
%
%
%


\bibitem{alibur} C. D. Aliprantis and O. Burkinshaw. {\it Locally Solid Riesz Spaces with Applications to Economics.} Am. Math. Soc. 2003.

\vspace*{-0.5em}

\bibitem{positiveoperators} C.D. Aliprantis and D.O. Burkinshaw. {\it Positive Operators.} Springer, Dordrecht, 2006.

\vspace*{-0.5em}

\bibitem{bel} B. Aqzzouz and A. Elbour. {\it Some characterizations of almost Dunford-Pettis operators and applications.} Positivity \textbf{15} (2011), no. 3, 369–380.

\vspace*{-0.5em}

\bibitem{wic} B. Aqzzouz, A. Elbour and A. W. Wickstead, {\it Positive almost Dunford-Pettis operators and their duality,} Positivity \textbf{15} (2011), no. 2, 185–197.

\vspace*{-0.5em}

\bibitem{bovi} F. Bombal and I. Villanueva. {\it Multilinear operators on spaces of continuous functions.} Funct. et Approx. Comment. Math \textbf{25} (1998), 117-126.

\vspace*{-0.5em}

\bibitem{gebu} G. Botelho, Q. Bu, D. Ji and K. Navoyan. {\it The positive Schur property on positive projective tensor products and spaces of regular multilinear operators}, Monatsh. Math. 197 (2022), no. 4, 565–578.

\vspace*{-0.5em}

\bibitem{lg2} G. Botelho and L. A. Garcia. {\it Adjoints and second adjoints of almost Dunford-Pettis operators}, preprint (available at arXiv:2207.11105v1 [math.FA], 2022)

\vspace*{-0.5em}

\bibitem{lg} G. Botelho and L. A. Garcia. {\it Bidual extensions of Riesz multimorphisms}, preprint (available
at arXiv:2108.03769v2[math.FA], 2021)

\vspace*{-0.5em}





\bibitem{jg}  G. Botelho and J. L. P. Luiz. {\it On the Schur, positive Schur and weak Dunford-Pettis properties in Fréchet lattices}, Ann. Fenn. Math. \textbf{46} (2021), 633-642.

\vspace*{-0.5em}

\bibitem{boulabiarbuskespage} K. Boulabiar, G. Buskes and R. Page, {\it On some properties of bilinear maps
of order bounded variation}, Springer, Positivity  \textbf{9}, 401–414, 2005.

\vspace*{-0.5em}

\bibitem{ryanpositivity} C. Boyd, R. A. Ryan and N. Snigireva, {\it Synnatzschke’s theorem for polynomials}, Positivity  \textbf{25}, 229–242, 2021.



\vspace*{-0.5em}

\bibitem{buskesroberts} G. Buskes and S. Roberts, {\it Arens extensions for polynomials
and the Woodbury–Schep formula}, Positivity and Noncommutative Analysis, 37–48, Trends Math., Birkhäuser, 2019.

\vspace*{-0.5em}

\bibitem{villanueva}  F. Cabello S\'anchez, R. Garc\'ia, I. Villanueva, {\it Extension of multilinear operators on Banach spaces}, Extracta Math. {\bf 15} (2000), no. 2, 291--334.




\vspace*{-0.5em}

\bibitem{daniela} D. Carando, S. Muro, D. M. Vieira, {\it The algebra of bounded-type holomorphic functions on the ball}, Proc. Amer. Math. Soc. {\bf 148} (2020), no. 6, 2447--2457.

\vspace*{-0.5em}

\bibitem{gutierrez} R. Cilia, J. M. Guti\'errez, {\it Some progress on the polynomial Dunford-Pettis property}, Atti Accad. Naz. Lincei Rend. Lincei Mat. Appl. {\bf 29} (2018), no. 4, 679--687.

\vspace*{-0.5em}

\bibitem{cambridge} V. Dimant, D. Galicer, J. T. Rodr\'iguez, {\it The polarization constant of finite dimensional complex spaces is one}, Math. Proc. Cambridge Philos. Soc. {\bf 172} (2022), no. 1, 105--123.

\vspace*{-0.5em}

\bibitem{dineen} S. Dineen, {\it Complex Analysis on Infinite Dimensional Spaces}, Springer, 1999.

\vspace*{-0.5em}

\bibitem{dofre} P. G. Dodds and D. H. Fremlin {\it  Compact operators in Banach lattices}, Israel J. Mat. \textbf{34} (1979), 287–320.

\vspace*{-0.5em}

\bibitem{peralta} A. A. Khosravi, V.  Ebrahimi,H. R. Hamid Vishki and A. M. Peralta. {\it Aron-Berner extensions of triple maps with application to the bidual of Jordan Banach triple systems.} Linear Algebra Appl. 580 (2019), 436–463.

\vspace*{-0.5em}

\bibitem{loane} J. Loane. {\it Polynomials on vector lattices,} PhD Thesis, National University of Ireland, Gal-
way, 2007.

\vspace*{-0.5em}

\bibitem{pilarenrique} A. Manzano, P. Rueda, E. A. S\'anchez-P\'erez, {\it Closed injective ideals of multilinear operators, related measures and interpolation}, Math. Nachr. {\bf 293} (2020), no. 3, 510--532.

\vspace*{-0.5em}

\bibitem{nieberg} P. Meyer--Nieberg. {\it Banach Lattices}, Universitext, Springer-Verlag, Berlin, 1991.



\vspace*{-0.5em}

\bibitem{elisa} E. R. Santos, {\it Polynomial Daugavet centers}, Q. J. Math. {\bf 71} (2020), no. 4, 1237--1251.


\vspace*{-0.5em}

\bibitem{wnukbari} W. Wnuk, {\it Banach lattices with properties of the Schur type: a survey}, Confer. Sem. Mat. Univ. Bari {\bf 249} (1993), 25 pp.

\vspace*{-0.5em}

\bibitem{wnuk2013} W. Wnuk, {\it On the dual positive Schur property in Banach lattices}, Positivity {\bf 17} (2013), 759--773.

\vspace*{-0.5em}

\bibitem{zalduendo} I. Zalduendo, {\it Extending polynomials on Banach spaces - a survey}, Rev. Un. Mat. Argentina {\bf 46} (2005), no. 2, 45--72 (2006).













































%
%
%


	

%
%
%
%







%



%







%
%
%
%
%
%
%
%
%
%
%


%
%
%























%




%
%
%
%
%

\end{thebibliography}
\end{document}